\documentclass[12pt]{article}

% Prepared using tac.cls and diagxy (if you do not have diagxy, compiling this will require commenting lines 165-168)
% PLEASE READ comments in the text below
% PLEASE NOTE: source files for submission to TAC require a comment like the following
%              giving style, packages used, TeX implementation
% TAC style, 2 pp, Xy-pic ver 3.7, MikTeX version 3.1

% NOTE: packages used should be the first part of the preamble

\usepackage[all]{xy}
\usepackage{amsmath, amssymb,enumitem, amsthm}
% diagxy loads xy, so the line preceding is redundant

\input diagxy

% The TAC hyperref setup should be reloaded after other packages

\usepackage[colorlinks=true]{hyperref}
\hypersetup{allcolors=[rgb]{0.1,0.1,0.4}}

% NOTE: TAC preamble macros come next...

\author{Michael Anton Hoefnagel}

% NOTE: that \thanks is outside the \author macro, unlike article style...

\title {Majority categories}
\date{}

\newcount\colveccount
\newcommand*\colvec[1]{
	\global\colveccount#1
	\begin{pmatrix}
		\colvecnext
	}
	\def\colvecnext#1{
		#1
		\global\advance\colveccount-1
		\ifnum\colveccount>0
		\\
		\expandafter\colvecnext
		\else
	\end{pmatrix}
	\fi
}

% examples of proclamation macros follow - these define environments like \begin{thm} ... \end{thm}
% by default they are italic, but roman proclamations for remarks, examples are also available
% with \newtheoremrm{}{}

% note that \newtheorem{theorem}{Theorem} and several others are already in tac.sty
% note rm This is useful for remarks and examples

% TAC has a \proof environment, with abbreviations following; note also \mathrmdef{} and \mathbfdef{}

\newcommand{\Top}{\mathbf{Top}}
\newcommand{\Met}{\mathbf{Met}}

\newcommand{\Lat}{\mathbf{Lat}}
\newcommand{\Rel}{\mathbf{Rel}}

\newcommand{\C}{\ensuremath{\mathbb{C}}}
\newcommand{\D}{\ensuremath{\mathbb{D}}}

\newcommand{\op}{\mathrm{op}}
\newcommand{\Pt}{\mathrm{Pt}}
\newcommand{\Mal}{\mathrm{Mal}}

\newcommand{\Maj}{\mathrm{Maj}}
\newcommand{\Set}{\mathbf{Set}}

\newtheorem{definition}{Definition}
\newtheorem{theorem}{Theorem}
\newtheorem{example}{Example}
\newtheorem{remark}{Remark}
\newtheorem{lemma}{Lemma}
\newtheorem{proposition}{Proposition}
\newtheorem{corollary}{Corollary}

\begin{document}
	\maketitle
	\begin{abstract}
		We introduce the notion of a majority category --- the categorical counterpart of varieties of universal algebras admitting a majority term. This notion can be thought to capture properties of the category of lattices, in a way that parallels how Mal'tsev categories capture properties of the category of groups. Among algebraic majority categories are the categories of lattices, Boolean algebras and Heyting algebras. Many geometric categories such as the category of topological spaces, metric spaces, ordered sets, any topos, ect., are comajority categories (i.e.~their duals are majority categories), and we show that, under mild assumptions, the only categories which are both majority and comajority, are the preorders. Mal'tsev majority categories provide an alternative generalization of arithmetical categories to protoarithmetical categories in the sense of Bourn. We show that every Mal'tsev majority category is protoarithmetical, provide a counter-example for the converse implication, and show that in the Barr-exact context, the converse implication also holds. We can then conclude that a category is arithmetical if and only if it is a Barr-exact Mal'tsev majority category, recovering in the varietal context a well known result of Pixley.
	\end{abstract}
	
	% NOTE: it is good practice to \label all headings (and proclamations) immediately
	
	\section{Introduction}\label{sec-Introduction}
	A \textit{majority term} in universal algebra is a ternary term $p$, satisfying the equations:
	\begin{align*}
	p(x,x,y) = x, \\
	p(x,y,x) = x, \tag{$*$}\\
	p(y,x,x) = x.
	\end{align*}
	Such a term naturally arises from the theory of congruence distributive varieties: a congruence permutable variety admits a majority term if and only if it is congruence distributive (this result
	was proved by A.~F.~Pixley, see Theorem~2 in \cite{pixley}). In the variety of lattices, the term 
	\[
	p(x,y,z) = (x \wedge y) \vee (x \wedge z) \vee (y \wedge z)
	\]
	is a majority term. If $R$ is a ring satisfying the identity $x^n = x$ for some $n \geqslant 2$ (a finite field for example), then the term 
	\[
	p(x,y,z) = x - (x - y)(x - z)^{n-1}
	\]
	is a majority term (moreover, every variety of rings which admits a majority term is contained in a variety of rings satisfying $x^n = x$ for some  $n \geqslant 2$, see \cite{michler-willie}). By Pixley's theorem mentioned above, the variety of implicative semi-lattices (also known as Heyting semi-lattices) \cite{bsl} possesses a majority term, since it has both distributive and permutable congruences. 
	
	In this paper, we introduce the notion of a \textit{majority category} --- the categorical counterpart of a variety of algebras admitting a majority term (this notion first appeared under the name of a ``Pixley category'' in a talk given by Z.~Janelidze \cite{talk}). These categories provide a link between the notion of a Mal'tsev category \cite{mal'tsev-categories} and the notion of an arithmetical category \cite{P96, DB-protoarithmetical}, and could bear as strong a relation to the category of lattices, as Mal'tsev categories do to the category of groups. Non-varietal examples of majority categories include the dual of any topos, the category of Von Neumann regular rings and the category of topological lattices.  
	
	We will show, amongst other things, that a Barr exact \cite{barr} category is arithmetical if and only if is both Mal'tsev and a majority category. This is a categorical analogue of Pixley's theorem for varieties of algebras mentioned above.  We first show that in the left-exact context, every (finitely complete) Mal'tsev majority category is necessarily protoarithmetical in the sense of D.~Bourn \cite{DB-protoarithmetical} (Corollary~\ref{cor-maltsev-majority-protoarithmetical} below). This is because every internal groupoid in a majority category is an equivalence relation (Theorem~\ref{thm-internal-groupoid}), but also follows from the fact that any unital majority category is antilinear in the sense of \cite{DB-antilinear}. Then, in the Barr-exact context, we show that the converse of Corollary~\ref{cor-maltsev-majority-protoarithmetical} holds:  a category is (proto)arithmetical  if and only if it is both Mal'tsev and a majority category (Theorem~\ref*{arithmetical-malcev-majority}). We then consider the question of whether, in general, protoarithmetical categories are the same as Mal'tsev majority categories, and answer this question in the negative. One of the basic observations here is that $\Rel_3^{\op}$, the dual of the category of ternary relations  (sets equipped with a ternary relation), is regular, has all limits and colimits, and is not a majority category (although, interestingly, the category of binary relations $\Rel_2^{\op}$ is). Then, the full-subcategory $\Mal(\Rel_3^{\op})$ of Mal'tsev objects (in the sense of \cite{thomas}) in $\Rel_3^{\op}$, is a Mal'tsev category in which every internal groupoid is an equivalence relation, and is therefore protoarithmetical. However, $\Mal(\Rel_3^{\op})$ will turn out not to be a majority category.
	
	Surprisingly, duals of many categories of geometric structures such as topological spaces, ordered sets, as well as metric spaces (with sub-contractions), tend to be comajority categories. This raises the question of whether there are categories which are simultaneously majority and comajority categories. We show that preorders are the only such categories among categories with finite limits and binary coproducts. This result is similar to the fact that a category $\C$ such that $\C$ and $\C^{\op}$ is distributive (in the sense of \cite{distributive-category}) is a preorder.
	
	\section{Definition and examples} \label{sec-def-and-examples}
	The presence of a majority term in a variety of algebras, is a condition which may be reformulated for an abstract category, using the so-called ``matrix method'' due to Z.~Janelidze (see \cite{closed1}). This method formulates the condition of a variety admitting a term satisfying some ``elementary equations'', in terms of a certain ``closedness property'' of internal relations in the variety, which is a categorical notion. For example: a Mal'tsev term $q(x,y,z)$ is a ternary term satisfying the equations 
	\begin{align*}
	&q(x_1,x_1,x_2) = x_2,  \\
	&q(y_2,y_1,y_1)  = y_2.
	\end{align*}
	These equations canonically determine an extended matrix of terms in the sense of \cite{closed1}:
	\[
	M =\left(\!\!\begin{array}{ccc|c}
	x_1 & x_1 & x_2 & x_2\\
	y_2 & y_1 & y_1 & y_2
	\end{array}\!\!\right)
	\]
	Recall that in a category $\C$, an internal binary relation $R$ between objects $X$ and $Y$ is a triple $(R_0, r_1,r_2)$, where $r_1:R_0 \rightarrow X$ and $r_2:R_0 \rightarrow Y$ are jointly monomorphic morphisms. If $x:S \rightarrow X$ and $y:S \rightarrow Y$ are any morphisms, we say that the pair $(x,y)$ is $R$-related if there exists a morphism $f:S \rightarrow R_0$ such that $r_1 f = x$ and $r_2 f= y$. Then $R$ is said to be (strictly) $M$-closed if for any morphisms $x_1,x_2:S \rightarrow X$ and $y_1,y_2:S \rightarrow Y$, if $(x_1,y_2), (x_1,y_1)$ and $(x_2,y_1)$ are $R$-related, then $(x_2,y_2)$ is $R$-related. If $R$ satisfies this property, then $R$ is said to be \textit{difunctional}. A finitely complete category $\C$ where every internal relation is difunctional is a Mal'tsev category in the sense of \cite{difunctional} (see also \cite{mal'tsev-categories} for the original notion). In this paper we follow \cite{approximate} and call a category $\C$ (not necessarily finitely complete) Mal'tsev, when every internal relation in $\C$ is difunctional.
	
	The general theory of closedness properties of internal relations (the ``matrix method''), provides a unified way in which to  establish general theorems of categories defined by such a matrix condition. In this setting, there is a general Bourn-localization theorem (see \cite{closed2}), which generalizes, for example, the fact that a finitely complete category $\C$ is Mal'tsev if and only if the fibres $\Pt_I(\C)$ of the fibration of points, are unital (see \cite{DB-unital} and Example~\ref{example-category-of-points} below). Examples of categories defined by such a matrix condition includes subtractive \cite{subtractive}, unital, strongly unital \cite{DB-unital}, and of course, Mal'tsev categories (see \cite{closed1, closed2, closed4}). The definition of a majority category adds to this list, by applying the matrix method to the majority term equations ($*$) given on the first page. 
	
	A ternary relation between objects $A,B$ and $C$ is a quadruple $R = (R_0,r_1,r_2,r_3)$ where $r_1: R_0 \rightarrow A$, $r_2: R_0 \rightarrow B$ and $r_3: R_0 \rightarrow C$ are jointly monomorphic morphisms. If $a:S \rightarrow A $, $b:S \rightarrow B$ and $c:S \rightarrow C$ are any morphisms in $\C$, then we shall say that the triple  $(a,b,c)$ is $R$-related if there exists a morphism $f:S \rightarrow R_0$ such that $r_1 f = a$, $r_2 f = b$ and $r_3 f = c$. 
	\begin{definition}\label{def-majority}
		A  category $\C$ is a majority category when every internal relation in $\C$ is strictly $M$-closed (in the sense of \cite{closed1} with:
		\[M = \left(\!\!\begin{array}{ccc|c}
		a_1 & a_1 & a_2 & a_1\\
		b_1 & b_2 & b_1 & b_1 \\
		c_2 & c_1 & c_1 & c_1
		\end{array}\!\!\right).
		\]
	\end{definition}
	That is to say $\C$ satisfies the following condition:
	\begin{itemize}
		\item[(\textbf{M})]  For any ternary relation $R= (R_0,r_1,r_2,r_3)$ in $\C$ and arbitrary morphisms $a_1,a_2: S \rightarrow A$, $b_1,b_2: S \rightarrow B$ and $c_1,c_2: S \rightarrow C$ in $\C$, if $(a_1,b_1,c_2), (a_1,b_2,c_1)$ and $(a_2,b_1,c_1)$ are $R$-related, then $(a_1,b_1,c_1)$ is $R$-related. 
	\end{itemize}
	In a category with binary products, the condition (\textbf{M}) simply states that a necessary and sufficient condition for a morphism $(a_1,b_1,c_1):S \rightarrow A \times B \times C$ factors through $R$ is that there exist $a_2:S \rightarrow A$,$b_2:S \rightarrow B$ and $c_2:S \rightarrow C$, such that $(a_1,b_1,c_2)$, $(a_1,b_2,c_1)$ and  $(a_2,b_1,c_1)$ factors through $R$.
	\subsection{Examples of Majority Categories}\label{sec-Examples}
	We shall say that a category $\C$ has \textit{image factorizations} if every  morphism $f:X \rightarrow Y$ in $\C$ factors as $f = me$ where $m$ is a monomorphism and $e$ a strong epimorphism. Then the factorization $f = me$ is called an image factorization of $f$.
	\[
	\xymatrix{
		X \ar@/_1pc/[rr]_f \ar[r]^{e}& \bullet  \ar[r]^m & Y	
	}
	\]	 
	We say that a category $\C$ has \textit{co-image factorizations} if $\C^{\op}$ has image factorizations. 
	
	The following theorem characterizes majority categories which have image factorizations. It is a straightforward adaptation  of a result in  \cite{thomas} (Proposition 2.3), which will be used to determine some of the examples of majority categories that follow. 
	\begin{theorem} \label{thm-maj-characterization-images}
		Let $\C$ be a category with image factorizations, binary coproducts and binary products. Then the following are equivalent:
		
		\begin{itemize}
			\item [(1)] $\C$ is a majority category;
			\item [(2)] For any object $S$ in $\C$, there exists a morphism $f:S \rightarrow R$ making the diagram
			\[
			\xymatrix{
				3S \ar[rd]^e\ar[dd]_{M =\begin{pmatrix}
					\iota_1& \iota_1& \iota_2 \\
					\iota_1& \iota_2& \iota_1 \\
					\iota_2& \iota_1& \iota_1
					\end{pmatrix}} & & \\
				& R \ar@{>->}[dl]^r\\
				(2S)^3 & & S \ar[ll]^{(\iota_1, \iota_1, \iota_1)} \ar@{..>}[lu]_f
			}
			\]
			commute, where $ M = r e $ is an image factorization.
			
		\end{itemize}
	\end{theorem}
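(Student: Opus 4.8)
The plan is to prove the two implications by interpreting the diagram in (2) as the assertion that the ``generic'' instance of condition (\textbf{M}) holds. Writing $j_1,j_2,j_3 : S \to 3S$ for the coproduct injections and $\pi_1,\pi_2,\pi_3 : (2S)^3 \to 2S$ for the product projections, the matrix $M$ is the unique morphism with $\pi_i M j_k$ equal to the displayed entry. Setting $A=B=C=2S$ together with $a_1=b_1=c_1=\iota_1$ and $a_2=b_2=c_2=\iota_2$, the three columns of $M$ become exactly the three ``input'' triples $(a_1,b_1,c_2)$, $(a_1,b_2,c_1)$, $(a_2,b_1,c_1)$ of condition (\textbf{M}), while $(\iota_1,\iota_1,\iota_1)$ is the ``output'' triple $(a_1,b_1,c_1)$. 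Taking the image factorization $M=re$ gives a monomorphism $r$ into the product $(2S)^3$, whose components are therefore jointly monic, so $R$ is a genuine ternary relation on $2S$, and each $e j_k : S \to R$ witnesses that the $k$-th input triple is $R$-related.

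For (1) $\Rightarrow$ (2) I would simply apply (\textbf{M}) to this relation $R$ and to the morphisms $a_i,b_i,c_i$ above: since the three input triples are $R$-related (via $e j_1, e j_2, e j_3$), the hypothesis that $\C$ is a majority category yields a morphism $f : S \to R$ with $r f = (\iota_1,\iota_1,\iota_1)$, which is precisely the commutativity required in (2).

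The substance lies in (2) $\Rightarrow$ (1). Given an arbitrary ternary relation $R' = (R'_0, r'_1, r'_2, r'_3)$ on objects $A',B',C'$ and morphisms $a_1,a_2 : S \to A'$, etc., with the three input triples $R'$-related via witnesses $g_1,g_2,g_3 : S \to R'_0$, I would form the copairings $[a_1,a_2]:2S \to A'$, $[b_1,b_2]:2S\to B'$, $[c_1,c_2]:2S\to C'$ and the induced product map $\phi = [a_1,a_2]\times[b_1,b_2]\times[c_1,c_2] : (2S)^3 \to A'\times B'\times C'$. A direct check on coproduct components shows that $\phi M j_k$ is the $k$-th input triple, whence $\phi M = r' g$ where $g=[g_1,g_2,g_3]:3S\to R'_0$ is the copairing of the witnesses and $r'=(r'_1,r'_2,r'_3)$; moreover $\phi(\iota_1,\iota_1,\iota_1)$ is exactly the output triple $(a_1,b_1,c_1)$.

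The key step, and the one I expect to be the crux of the argument, is the construction of a comparison morphism $h : R \to R'_0$ transporting the generic relation into $R'$. Since $\phi r e = \phi M = r' g$ and $r'$ is a monomorphism (its components being jointly monic into the product), the square with top edge $g$, left edge $e$, bottom edge $\phi r$ and right edge $r'$ commutes; because $e$ is a strong epimorphism it has the left lifting property against the monomorphism $r'$, yielding $h : R \to R'_0$ with $r' h = \phi r$. Composing with the morphism $f : S \to R$ supplied by (2) then gives $r'(hf) = \phi r f = \phi(\iota_1,\iota_1,\iota_1) = (a_1,b_1,c_1)$, so $hf$ exhibits the output triple as $R'$-related, establishing (\textbf{M}) for the arbitrary relation $R'$. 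The only point demanding care is that $A'$, $B'$ and $C'$ are in general distinct, so $\phi$ must be assembled as a product of three separate copairings; once this is in place, the diagonal fill-in is the sole essential ingredient.
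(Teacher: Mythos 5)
Your proposal is correct and follows essentially the same route as the paper's proof: the same specialization $a_i=b_i=c_i=\iota_i$ for (1)~$\Rightarrow$~(2), and for (2)~$\Rightarrow$~(1) the same product-of-copairings map $\phi$, the same copairing of witnesses, and the same diagonal fill-in of the strong epimorphism $e$ against the monomorphism $r'$ (your $h$ is the paper's $\beta$, your $f$ from (2) is the paper's $\alpha$). No discrepancies to report.
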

	\begin{proof}
		Composing $e$ with each of the canonical inclusions $S \rightarrow 3S$, and applying the fact that $\C$ is a majority category,  we have (1) implies (2). We show (2) implies (1): let $\C$ be a category with image factorizations and binary products and binary coproducts. Let $A,B,C$ be any objects in $\C$ and  $r':R' \rightarrowtail A\times B\times C$ any ternary relation. Suppose that $a_1,a_2 \in  \hom(S,A)$, $b_1,b_2 \in  \hom(S,B), c_1,c_2 \in \hom(S,C)$ and $f_1,f_2,f_3 \in \hom(S,R')$ are such that
		\[
		\xymatrix{
			& R' \ar[d]^{r'} & & R' \ar[d]^{r'}  & & R' \ar[d]^{r'} \\
			S \ar[ru]^{f_3} \ar[r]_-{(a_1,b_1,c_2)} &  A \times B \times C & S \ar[ru]^{f_2} \ar[r]_-{(a_1,b_2,c_1)} & A \times B \times C & S \ar[ru]^{f_1} \ar[r]_-{(a_2,b_1,c_1)} & A \times B \times C
		}
		\]
		commute. This implies that the dotted arrow $f$ exists, making the diagram
		\[
		\xymatrix{
			3S \ar@/_2pc/[dd]_M \ar@{..>}[rd]^{f} \ar@/^2pc/[rrdd]^{\begin{pmatrix}
				a_1 & b_1 & c_2  \\
				a_1 & b_2 & c_1  \\
				a_2 & b_1 & c_1
				\end{pmatrix}} \ar[d]_{e} & &  \\
			R \ar@{ >->}[d]_{r}& R' \ar@{  >->}[dr]^{r'} \\
			(2S)^3 \ar[rr]_-{\small\begin{pmatrix}
				a_1 \\
				a_2
				\end{pmatrix} \times \small\begin{pmatrix}
				b_1 \\
				b_2
				\end{pmatrix} \times \small\begin{pmatrix}
				c_1 \\
				c_2
				\end{pmatrix}}  &  & A \times B \times C
		}
		\]
		commute. By assumption, we have that $(\iota_1, \iota_1, \iota_1):S \rightarrow (2S)^3$ factors through $R$ ($\alpha$ in the diagram below), and also by the fact that $M = re$ is an image-factorization, there exists $\beta: R \rightarrow R'$ making the diagram
		\[
		\xymatrix{
			&&3S\ar[rd]^{f}\ar@/^2pc/[rrdd]^{\begin{pmatrix}
				a_1 & b_1 & c_2  \\
				a_1 & b_2 & c_1  \\
				a_2 & b_1 & c_1
				\end{pmatrix}} \ar[d]_{e} & &  \\
			&&
			R\ar@{..>}[r]_{\beta} \ar@{ >->}[d]_{r}& R' \ar@{  >->}[dr]^{r'} \\
			S\ar@{..>}[rru]^{\alpha}\ar[rr]_{(\iota_1,\iota_1,  \iota_1)} &&
			(2S)^3 \ar[rr]_{\small\begin{pmatrix}
				a_1 \\
				a_2
				\end{pmatrix} \times \small\begin{pmatrix}
				b_1 \\
				b_2
				\end{pmatrix} \times \small\begin{pmatrix}
				c_1 \\
				c_2
				\end{pmatrix}}  &  & A \times B \times C
		}
		\]
		commute. Then $r'(\beta \alpha)$ is a factorization of $(a_1,b_1,c_1)$ through $R'$.
	\end{proof}
	\noindent
	By the dual of the theorem above, to verify that $\C^{\op}$ is a majority category, where $\C^{\op}$ has image factorizations, binary products and binary coproducts, it suffices to show the existence of the morphism $f:R \rightarrow S$ making the diagram
	\[
	\xymatrix{
		S^3   & &  \\
		& R \ar[lu]_{e} \ar@{..>}[dr]^f \\
		3S^2 \ar[uu]^{\begin{pmatrix}
			\pi_1& \pi_1& \pi_2 \\
			\pi_1& \pi_2& \pi_1 \\
			\pi_2& \pi_1& \pi_1
			\end{pmatrix}}  \ar@{->>}[ru]^{r} \ar[rr]_{\begin{pmatrix}
			\pi_1 \\
			\pi_1 \\
			\pi_1
			\end{pmatrix}} & & S
	}
	\]
	in $\C$ commute, 
	where $re$ is a co-image factorization of the vertical morphism. This will be done to establish the three examples that follow.
	
	\begin{example} \label{example-Top}
		$\Top^{\op}$ has image factorizations (since it is a  regular regular category). In the above diagram, we may take $R$ to be the set-theoretic image of the vertical morphism equipped with the subspace topology on $S^3$. Then $R$ is given by
		\[
		R = \{(x,x,y) \mid x,y \in S\} \cup \{(x,y,x) \mid x,y \in S\} \cup \{(y,x,x) \mid x,y \in S\}.
		\]
		The morphism $e$ is the canonical inclusion of $R$ into $S^3$, and $r$ is the projection onto the image of the vertical morphism. If $f$ exists, it must satisfy
		\[
		f(x,x,y) = f(x,y,x) = f(y,x,x) = x,
		\]
		since the bottom triangle commutes. Therefore, $\Top^{\op}$ is a comajority category if and only if  $f$ above is continuous for any space $S$: given an open set $U \subseteq S$,
		\[
		f^{-1}(U) = R \cap \big((U \times U \times S) \cup (U \times S \times U) \cup(S \times U \times U)\big).
		\]
	\end{example}
	
	\begin{example} \label{example-rel-2}
		The category $\Rel_2$ has as its objects pairs $(X, \rho_X)$, where $X$ is a set and $\rho_X$ is a binary relation on $X$. A morphism $f:(X, \rho_X)  \rightarrow (Y, \rho_Y)$ is simply a function $f:X \rightarrow Y$ for which:
		\[
		x \rho_X y \implies f(x) \rho_Y f(y)
		\]
		--- such functions are called \textit{monotone}. Similarly as in Example~\ref{example-Top},  for any object $S$ in $\Rel_2$, the map $f:R \rightarrow S$ defined by
		\[
		f(x,x,y) = f(x,y,x) = f(y,x,x) = x
		\]
		is monotone, where
		\[
		R = \{(x,x,y) \mid x,y \in S\} \cup \{(x,y,x) \mid x,y \in S\} \cup \{(y,x,x) \mid x,y \in S\},
		\]
		equipped with the restriction of $\rho_{S^3}$. This is easily verified.
	\end{example}
	\begin{remark} \label{rem-ternary-relations}
		Although the category $\Rel_2$ of sets equipped with binary relations is a comajority category, the category of sets equipped with ternary relations $\Rel_3$ (where morphisms preserve the ternary relation) is not a comajority category (see Section~\ref{sec-relation-to-protoarithmetical}).
	\end{remark}
	\begin{example}
		As shown in $\cite{thomas}$, the category of (extended) metric spaces $\Met_{\infty}$ is coregular, and has products and coproducts. The co-image factorization of a morphism is given by the projection onto the closure of the set-theoretic image $f(X)$ followed by the inclusion into Y:
		\[
		X \rightarrow \overline{f(X)} \rightarrow Y.
		\]
		Given an (extended) metric space $S$, the image of the vertical morphism in the diagram above is given by
		\[
		R = \{(x,x,y) \mid x,y \in S\} \cup \{(x,y,x) \mid x,y \in S\} \cup \{(y,x,x) \mid x,y \in S\},
		\]
		which may be checked to be a closed subset of $S^3$. Therefore, it again suffices to show that $f:R \rightarrow S$ defined by
		\[
		f(x,x,y) = f(x,y,x) = f(y,x,x) = x,
		\]
		is a subcontraction --- which is easily verified. Thus $\Met_{\infty}$ is a comajority category. Using similar arguments as in Theorem~4.3 in \cite{thomas}, it will follow that $\Met$ is too a comajority category.
	\end{example}
	\begin{example}
		By Corollary~\ref{arithmetical-malcev-majority}, any arithmetical category in the sense of \cite{P96,DB-protoarithmetical} is a majority category, so that in particular the dual of every topos, the category of Von Neumann regular rings, Heyting algebras, ect, are all majority categories.
	\end{example}
	\begin{example}
		If $\C$ is a category with products, then the category of $\text{M}(\C)$ of internal majority algebras is a majority category. Therefore, the category $\text{M}(\textbf{Pos})$ of internal majority algebras in the category of partially ordered sets is a majority category. In fact, it can be shown that this is an example of a majority category that is not regular. 
	\end{example}
	\begin{remark}
		A partial order is said to by dually-directed if every pair of elements have a lower and upper bound. Interestingly, if we consider the category $\mathbf{DPos}$ of dually-directed partial orders, then $\text{M}(\textbf{DPos}) \simeq \Lat$ --- the variety of lattices. 
	\end{remark}
	\begin{example}
		It is easy to see that any preorder is a majority category. 
	\end{example}
	Examples of categories which are neither majority nor comajority categories include $\mathbf{Cat}$ the category of all small categories, and also the category of monoids $\mathbf{Mon}$ or groups $\mathbf{Grp}$.
	The next theorem is a special case of Theorem~3.2 in \cite{closed2}.
	\begin{proposition} \label{thm-majority-reflect}
		Suppose that $\C$ and $\mathbb{D}$ are finitely complete categories, and let $F:\mathbb{D} \rightarrow \C$ be a pullback-preserving functor which reflects isomorphisms. Then, if $\C$ is a majority category, then so is $\mathbb{D}$.
	\end{proposition}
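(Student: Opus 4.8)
The plan is to verify condition (\textbf{M}) for $\mathbb{D}$ by hand, transporting the given relational data along $F$ into $\C$, invoking that $\C$ is a majority category there, and then pulling the conclusion back along $F$ using that $F$ reflects isomorphisms. The central device I would use is a reformulation of ``$R$-relatedness'' in terms of a single pullback being an isomorphism; this is what allows the argument to run on preservation of \emph{pullbacks} alone, rather than on full left exactness.

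First I would record two preliminary facts about $F$. Since $F$ preserves pullbacks it preserves monomorphisms, because a morphism $m$ is monic exactly when the canonical map from its domain to its kernel pair is invertible, and both the kernel pair and this condition are pullback data. Consequently, although $F$ need not preserve a product $A\times B\times C$, the comparison morphism $\theta\colon F(A\times B\times C)\to FA\times FB\times FC$ is nonetheless a monomorphism: writing the product as an iterated pullback over the terminal object, $F$ carries it to the corresponding iterated pullback over $F1$, and $\theta$ satisfies $\mathrm{pr}_i\circ\theta=(\text{projection of this }F1\text{-pullback})$; since the pullback's projections are jointly monic, $\theta$ is monic. It follows that if $\bar r=(r_1,r_2,r_3)\colon R_0\to A\times B\times C$ is the mono coming from a ternary relation $R$, then $(Fr_1,Fr_2,Fr_3)=\theta\circ F\bar r$ is again a mono, so $FR:=(FR_0,Fr_1,Fr_2,Fr_3)$ is a genuine ternary relation in $\C$.

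Next comes the key reformulation, which I expect to be the main obstacle to state cleanly. Given $R$ and morphisms $a\colon S\to A$, $b\colon S\to B$, $c\colon S\to C$, form the pullback $W$ of $\bar a=(a,b,c)$ along the mono $\bar r$; then $W\to S$ is monic, and $(a,b,c)$ is $R$-related if and only if $W\to S$ is an isomorphism. Because $W$ is a single pullback, $F$ preserves it, and because $\theta$ is monic the pullback of $\theta\bar a$ along $\theta\bar r$ agrees with that of $\bar a$ along $\bar r$; hence $FW\to FS$ is precisely the witness monomorphism in $\C$ that tests whether $(Fa,Fb,Fc)$ is $FR$-related. Verifying this identification carefully is the delicate point, since it is exactly what lets pullback-preservation (not preservation of all finite limits) carry the whole argument.

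Finally I would assemble (\textbf{M}). Suppose $(a_1,b_1,c_2)$, $(a_1,b_2,c_1)$, $(a_2,b_1,c_1)$ are $R$-related in $\mathbb{D}$. Applying $F$ to the respective witnessing morphisms shows that $(Fa_1,Fb_1,Fc_2)$, $(Fa_1,Fb_2,Fc_1)$, $(Fa_2,Fb_1,Fc_1)$ are $FR$-related in $\C$. As $\C$ is a majority category and $FR$ is a relation, $(Fa_1,Fb_1,Fc_1)$ is $FR$-related; by the reformulation this means the witness mono $FW\to FS$ associated with $(a_1,b_1,c_1)$ is an isomorphism, i.e.\ $F$ sends the mono $W\to S$ to an isomorphism. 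Since $F$ reflects isomorphisms, $W\to S$ is already an isomorphism in $\mathbb{D}$, so $(a_1,b_1,c_1)$ is $R$-related. Hence every internal relation in $\mathbb{D}$ is $M$-closed and $\mathbb{D}$ is a majority category. I would isolate the two preservation facts (of monos and of the comparison $\theta$) as small lemmas before running the main argument, since they are the only genuinely nontrivial ingredients.
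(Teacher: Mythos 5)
Your proof is correct, but it takes a genuinely different route from the paper: the paper does not prove this proposition directly at all, instead citing it as a special case of Theorem~3.2 of Janelidze's \emph{Closedness properties of internal relations II} (the general Bourn-localization theorem of the matrix method), which yields the reflection statement uniformly for every matrix condition --- Mal'tsev, unital, subtractive, majority, and so on. Your argument is the direct, self-contained verification for the majority matrix, and it isolates exactly the two technical points that make such a theorem work with only pullback-preservation as a hypothesis: (i) $F$ preserves monomorphisms (monicity is invertibility of the diagonal into the kernel pair, which is pullback data) and sends relations to relations, because the comparison $\theta\colon F(A\times B\times C)\to FA\times FB\times FC$ is monic --- here your observation that $F$ sends the iterated pullback over $1$ to an iterated pullback over $F1$ (which need not be terminal), whose projections are jointly monic, is the right way around the fact that $F$ need not preserve products; and (ii) $R$-relatedness of $(a,b,c)$ is equivalent to invertibility of the witness monomorphism $W\to S$ obtained by pulling back $(r_1,r_2,r_3)$ along $(a,b,c)$, a single pullback which $F$ preserves and whose invertibility $F$ reflects. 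The monicity of $\theta$ is then used exactly where it is needed: it guarantees both that $FR$ is a relation in $\C$ and that pulling back $\theta\circ F\bar{a}$ along $\theta\circ F\bar{r}$ gives the same object as pulling back $F\bar{a}$ along $F\bar{r}$, so that $FW\to FS$ really is the witness in $\C$. What the paper's citation buys is generality (one theorem covering all matrix conditions at once, feeding into the general localization theory for fibres of the fibration of points); what your proof buys is a transparent, elementary argument readable without any of the matrix-method machinery, and it makes visible that pullback-preservation plus iso-reflection --- rather than full left-exactness --- suffices.
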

	As a consequence of this theorem, we have that if $\C$ is a finitely complete majority category and $X$ any object in $\C$, then both comma categories $(X \downarrow \C)$ and $(\C \downarrow X)$ are majority categories. This is because the forgetful functors $(X \downarrow \C) \rightarrow \C$ and $(\C\downarrow X) \rightarrow \C$ preserve pullbacks and reflect isomorphisms. Also, if $\C$  and $\mathbb{D}$ are categories, with $\C$ a finitely complete majority category, then $\C^{\mathbb{D}}$ is a majority category. 
	\begin{example} \label{example-category-of-points}
		Given a category $\C$ and an object $I$ in $\C$, the category of points $\Pt_I(\C)$ over $I$ has as its objects pairs $(p,s)$ where  $p:X \rightarrow I$ is a split epimorphisms with a chosen splitting $s$. A morphism $f:(p,s) \rightarrow (q,t)$ in $\Pt_I(\C)$ is a morphism in $\C$ such that $qf = p$ and $fs = t$  (see \cite{DB-unital} and \cite{borceux-bourn} for details). If $\C$ has finite limits then so does $\Pt_I(\C)$, and the domain functor $\Pt_I(\C) \rightarrow \C$ which takes $(p,s)$ to the domain of $p$, satisfies the conditions of Proposition~\ref{thm-majority-reflect}. Thus if $\C$ is a finitely complete majority category, then $\Pt_I(\C)$ is a pointed finitely complete majority category for any object $I$ in $\C$.
	\end{example}
	
	\section{Relation to arithmetical, protoarithmetical and antilinear categories} \label{sec-relation-to-protoarithmetical}
	The notion of an \textit{arithmetical} category was first introduced by M.~C.~ Pedicchio in \cite{P96}, as a Barr-exact Mal'tsev category with coequalizers, which is congruence distributive. It was proved there that in an arithmetical category, every internal groupoid is an equivalence relation, moreover this property characterizes arithmetical categories among Barr-exact Mal'tsev categories with coequalizers. Examples of such categories are the dual of any topos, as well as the categories of Boolean algebras and Heyting algebras. In \cite{DB-protoarithmetical}, the author introduces the notion of a \textit{protoarithmetical} category, which is the same as a finitely complete Mal'tsev category in which every internal groupoid is an equivalence relation. In the Barr-exact context, protoarithmetical categories are characterized as congruence distributive Mal'tsev categories. Thus in \cite{DB-protoarithmetical}, an arithmetical category is a Barr-exact Mal'tsev category which is congruence distributive (dropping coequalizers from the original definition), which is what we will mean by arithmetical category. This section shows that in the Barr-exact context, arithmetical categories are precisely Mal'tsev majority categories. And that in general, a protoarithmetical category need not be a majority category.
	\begin{definition} \label{def-protoarithmetical}
		A protoarithmetical category is a finitely complete Mal'tsev category in which every internal groupoid is an equivalence relation.
	\end{definition}
	\begin{remark} \label{rem-definition-protoarithmetical}
		The orginal definition of a protoarithmetical category, which is equivalent to Definition~\ref{def-protoarithmetical}, is that of a finitely complete category $\C$ where the category of points $\Pt_I(\C)$ above any object $I$ is unital \cite{DB-unital}, and such that every internal group in $\Pt_I(\C)$ is trivial. 
	\end{remark}
	One of the main results of \cite{DB-protoarithmetical} is the following Theorem.
	\begin{theorem}[\cite{DB-protoarithmetical}] \label{thm-protoarithmetical-implies-arithmetical}
		A Barr exact category $\C$ is protoarithmetical if and only if it is Mal'tsev and congruence distributive (i.e.~it is arithmetical)
	\end{theorem}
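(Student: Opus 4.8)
The plan is to peel off the common hypotheses and reduce to a purely lattice-theoretic statement about the categorical commutator. Both notions in play are built on a Barr-exact Mal'tsev category: protoarithmetical adds the condition that every internal groupoid be an equivalence relation (Definition~\ref{def-protoarithmetical}), while arithmetical, as used here, adds congruence distributivity. Hence it suffices to show that for a Barr-exact Mal'tsev category $\C$, every internal groupoid in $\C$ is an equivalence relation if and only if $\C$ is congruence distributive. Since Mal'tsev categories are congruence permutable, and hence congruence modular, I would import the Smith--Pedicchio commutator $[R,S]$ of equivalence relations together with its standard properties (cf.~\cite{P96}): $[R,S] \leq R \cap S$ always; the modular quotient formula $[R/T,S/T] = ([R,S] \vee T)/T$ for $T \leq R \cap S$; and the neutrality criterion, that a congruence-modular category is congruence distributive exactly when $[R,S] = R \cap S$ for all equivalence relations $R,S$.

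The key geometric input is the correspondence between internal groupoids and connectors valid in any Mal'tsev category. An internal groupoid $G_1 \rightrightarrows G_0$ equips the pair of kernel-pair relations $R[d], R[c]$ of its domain and codomain maps on $G_1$ with a connector supplied by composition; conversely, in the Barr-exact Mal'tsev setting every pair of equivalence relations admitting a connector arises, up to isomorphism, from such a groupoid. Under this dictionary, the condition that $G$ be an equivalence relation---that $(d,c)\colon G_1 \to G_0 \times G_0$ be a monomorphism---translates into $R[d] \cap R[c] = \Delta_{G_1}$, because $R[d] \cap R[c]$ is precisely the kernel pair of $(d,c)$. Moreover, two equivalence relations admit a connector exactly when their commutator vanishes. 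Thus ``every internal groupoid is an equivalence relation'' becomes the assertion: whenever $[R,S] = \Delta$ for equivalence relations $R,S$ on an object, one has $R \cap S = \Delta$.

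It then remains to check that this last assertion is equivalent to neutrality, hence to congruence distributivity. One implication is immediate: if $[R,S] = R \cap S$ for all $R,S$, then $[R,S] = \Delta$ forces $R \cap S = \Delta$. For the converse I would argue by descent to a quotient. Given arbitrary $R,S$ on $X$ with $T := [R,S] \leq R \cap S$, form the quotient $X/T$, which exists and again lies in $\C$ by Barr-exactness; the quotient formula with $T = [R,S]$ yields $[R/T, S/T] = \Delta$, so the images form a connected pair on $X/T$. The hypothesis gives $(R/T) \cap (S/T) = \Delta$, and since $T \leq R \cap S$ the lattice isomorphism between congruences above $T$ and congruences on $X/T$ gives $(R\cap S)/T = (R/T) \cap (S/T) = \Delta$. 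Hence $R \cap S \leq T = [R,S]$, and combined with $[R,S] \leq R \cap S$ this is neutrality.

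The main obstacle is not the commutator bookkeeping, which is routine once the framework is available, but rather erecting that framework rigorously in the categorical, not-necessarily-pointed Barr-exact Mal'tsev context: establishing the groupoid/connector equivalence together with the fact that \emph{every} connected pair is realized by a groupoid, and verifying that the Smith--Pedicchio commutator enjoys precisely the properties (monotonicity, the modular quotient formula, existence of a connector exactly when the commutator vanishes, and the neutrality characterization of distributivity) invoked above. With those structural inputs in hand, the equivalence follows as sketched.
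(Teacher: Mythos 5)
First, a point of orientation: the paper does not prove this statement at all --- it is Bourn's theorem, quoted with attribution from \cite{DB-protoarithmetical}, so there is no internal proof to compare yours against, and your proposal has to stand on its own. As written, it does not, because its entire strategy rests on commutator machinery that is not available at the stated level of generality. The theorem concerns Barr-exact categories with \emph{no} colimit assumptions beyond exactness, which guarantees only quotients $X/T$ of equivalence relations. The Smith--Pedicchio commutator $[R,S]$ --- the smallest $T$ such that the images of $R$ and $S$ in $X/T$ admit a connector --- together with the calculus you invoke (monotonicity, the quotient formula, and the join-additivity $[R,S\vee W]=[R,S]\vee [R,W]$ that underlies the ``neutrality $\Leftrightarrow$ distributivity'' criterion) is developed in \cite{P96} and the subsequent literature only under the standing hypothesis of a Barr-exact Mal'tsev category \emph{with coequalizers}: the construction of $[R,S]$ is a genuine colimit (a universal quotient connecting $R$ and $S$), and neither its existence nor its properties are established without such hypotheses. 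This is not a deferrable technicality. As the opening of Section~\ref{sec-relation-to-protoarithmetical} explains, Pedicchio's theorem already characterizes arithmetical categories by the groupoid condition \emph{among Barr-exact Mal'tsev categories with coequalizers}; the entire content of Bourn's theorem --- the one you are proving --- is that the coequalizer hypothesis can be dropped. So your argument, if completed along the lines indicated, reproves Pedicchio's result rather than this one. Bourn's proof is forced to proceed by different means, through the fibration of points (protoarithmetical $=$ every fibre $\Pt_I(\C)$ is unital with only trivial internal groups, cf.\ Remark~\ref{rem-definition-protoarithmetical}), using only the quotients that exactness provides.

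There is also a local error in your dictionary: it is false that every pair of equivalence relations admitting a connector ``arises, up to isomorphism, from a groupoid''. For a groupoid, $R[d]$ and $R[c]$ are the kernel pairs of a \emph{reflexive graph} $d,c\colon G_1 \rightrightarrows G_0$, i.e.\ of a parallel pair with a common codomain \emph{and a common section}; realizing an arbitrary connected pair $(R,S)$ on $X$ in this form forces $X/R \cong X/S$ together with a common splitting of both quotient maps. Already in the category of abelian groups (where every pair is connected) one can take $R$ and $S$ with non-isomorphic quotients, so no such groupoid exists. The implication you need is nevertheless true, but the bridge is different: restrict the connector to $T = R \cap S$, so that $T$ is an abelian equivalence relation; an abelian equivalence relation is precisely an internal group structure on the point $(t_1\colon T \to X,\ \delta)$ in $\Pt_X(\C)$, equivalently an internal groupoid with $d = c = t_1$; applying the hypothesis to \emph{that} groupoid makes $(t_1,t_1)$ monic, hence $t_1$ monic, hence $T = \Delta_X$. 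Note that this repair is again Bourn's internal-groups-in-the-fibres mechanism rather than your commutator dictionary --- which reinforces the previous point: at this level of generality the proof has to be organized around connectors, points and internal groups, not around $[R,S]$.
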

	\begin{theorem} \label{thm-internal-groupoid}
		Every internal groupoid in a majority category is an equivalence relation.
	\end{theorem}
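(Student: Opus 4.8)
The plan is to reduce the statement to a single monomorphism condition and then feed one carefully chosen trio of generalized elements into the majority condition (\textbf{M}). Recall that an internal groupoid $(G_1 \rightrightarrows G_0)$, with domain $d$, codomain $c$, unit $e : G_0 \to G_1$, inverse $i : G_1 \to G_1$ and composition $m$, ``is an equivalence relation'' precisely when $\langle d,c\rangle : G_1 \to G_0 \times G_0$ is a monomorphism: given this, the unit makes $\langle d,c\rangle$ reflexive, the inverse makes it symmetric, and composition makes it transitive, so that $G_1$ becomes an internal equivalence relation on $G_0$. Thus the entire content is to prove that $\langle d,c\rangle$ is monic, i.e.~that any two \emph{parallel} generalized elements $f,g : S \to G_1$, meaning $df = dg$ and $cf = cg$, necessarily satisfy $f = g$.

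First I would introduce the relation on which (\textbf{M}) is to be tested: the graph of internal composition. Writing $P = G_1 \times_{G_0} G_1$ for the object of composable pairs (the pullback of $c$ and $d$), with projections $p_1, p_2 : P \to G_1$, I take the ternary relation $R = (P, p_1, p_2, m)$ on $(G_1, G_1, G_1)$. This is a genuine internal relation because $\langle p_1, p_2\rangle$ is a monomorphism by the universal property of the pullback, so $p_1, p_2, m$ are jointly monomorphic. Unwinding the definitions, a triple $(u,v,w)$ of generalized elements is $R$-related exactly when $cu = dv$ and $u\cdot v = w$, where $u\cdot v := m(u,v)$ denotes the internal composite.

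Then I would apply (\textbf{M}) to the following instance. Put $x = df = dg$ and $y = cf = cg$, write $1_x = ex$, $1_y = ey$ and $f^{-1} = if$, and take $a_1 = 1_x$, $a_2 = g\cdot f^{-1}$, $b_1 = f$, $b_2 = g$, $c_1 = g$, $c_2 = f$, so that the majority matrix becomes
\[
\left(\!\!\begin{array}{ccc|c}
1_x & 1_x & g\cdot f^{-1} & 1_x\\
f & g & f & f \\
f & g & g & g
\end{array}\!\!\right).
\]
A direct check using the unit, inverse and associativity laws of the groupoid shows the three premise columns are $R$-related: $1_x\cdot f = f$ and $1_x\cdot g = g$ handle the first two, while $(g\cdot f^{-1})\cdot f = g\cdot(f^{-1}\cdot f) = g\cdot 1_y = g$ handles the third. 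Condition (\textbf{M}) then forces the last column $(1_x, f, g)$ to be $R$-related, which by the description of $R$ means $1_x\cdot f = g$; since $1_x\cdot f = f$, we conclude $f = g$, as required.

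The main obstacle, and the only genuinely creative step, is the choice of the entry $a_2 = g\cdot f^{-1}$: the third premise column $(a_2, f, g)$ must be a true $R$-relation \emph{without} presupposing the conclusion $f = g$, and this forces $a_2$ to be an honest morphism $S \to G_1$ satisfying $a_2\cdot f = g$. It is precisely here that invertibility enters --- $g\cdot f^{-1}$ is the unique such morphism --- which explains why the theorem concerns internal groupoids rather than merely internal categories. Everything else is a routine verification of the groupoid identities at the level of generalized elements, together with the observation that reflexivity, symmetry and transitivity of $\langle d,c\rangle$ are witnessed respectively by $e$, $i$ and $m$.
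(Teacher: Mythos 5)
Your proof is correct and follows essentially the same route as the paper: both reduce the statement to joint monicity of $\langle d,c\rangle$, use the graph of composition $(G_2,p_1,p_2,m)$ as the ternary relation, and feed (\textbf{M}) three composites built from $f$, $g$, inverses and identities. The only difference is the choice of matrix entries --- the paper cancels with $\sigma g$ on the right to conclude $m(f,\sigma g)=sd_1g$, while you compose with $1_x$ and $g\cdot f^{-1}$ on the left to conclude $1_x\cdot f = g$ --- which is an inessential variation of the same argument.
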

	\begin{proof}
		Suppose that the diagram
		\[
		\xymatrix{
			G_2 \ar[r]^-m & G_1 \ar@(ul,ur)^{\sigma} \ar@/^1pc/[r]^{d_1} \ar@/_1pc/[r]_{d_0} & G_0 \ar[l]|-s	
		}
		\]
		is an internal groupoid in a majority category $\C$,  then we show that $d_1$ and $d_2$ are jointly monomorphic. Let $p_1:G_2 \rightarrow G_1$ and $p_2:G_2 \rightarrow G_1$ be the canonical pullback projections.  Then $R = (G_2,p_1,p_2,m)$ is a ternary relation in $\C$, since $p_1$ and $p_2$ are jointly-monomorphic. Suppose that $f,g: S \rightarrow G_1$ are morphisms with $d_1 f = d_1 g$ and $d_0 f = d_0 g$, then $(f,\sigma f, s d_1 f)$ and $(g,\sigma g, s d_1 g)$ and $(f, \sigma g, m(f,\sigma g))$ are all $R$-related, and hence so is $(f,\sigma g, s d_1 g)$ so that  $m(f,\sigma g) = s d_1 g$, which implies $f = g$.
	\end{proof}
	\begin{corollary} \label{cor-maltsev-majority-protoarithmetical}
		Every finitely complete Mal'tsev majority category is protoarithmetical.
	\end{corollary}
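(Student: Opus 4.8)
The plan is to observe that this corollary is an immediate consequence of Theorem~\ref{thm-internal-groupoid} together with the defining conditions of a protoarithmetical category. I would begin by unpacking Definition~\ref{def-protoarithmetical}: a category is protoarithmetical precisely when it is (i) finitely complete, (ii) Mal'tsev, and (iii) such that every internal groupoid is an equivalence relation. The strategy is therefore simply to verify that each of these three conditions is supplied by the hypotheses of the corollary.

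Let $\C$ be a finitely complete Mal'tsev majority category. Conditions (i) and (ii) hold verbatim by assumption, since $\C$ is by hypothesis finitely complete and Mal'tsev. For condition (iii), I would invoke Theorem~\ref{thm-internal-groupoid}, which asserts that every internal groupoid in \emph{any} majority category is an equivalence relation; as $\C$ is a majority category, this applies directly. With all three conditions established, Definition~\ref{def-protoarithmetical} yields that $\C$ is protoarithmetical, completing the argument.

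Since the proof is a direct assembly of the definition and a previously established theorem, there is no substantive obstacle to overcome. The entire content of the corollary has in effect already been discharged in the proof of Theorem~\ref{thm-internal-groupoid}, whose crucial step was showing that the domain and codomain maps $d_0, d_1$ of an internal groupoid are jointly monomorphic. The corollary merely records that, once this majority-category fact is combined with finite completeness and the Mal'tsev property, one lands exactly inside Bourn's class of protoarithmetical categories.
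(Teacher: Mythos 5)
Your proof is correct and matches the paper's intent exactly: the corollary is stated in the paper without a separate proof, precisely because it follows immediately from Definition~\ref{def-protoarithmetical} combined with Theorem~\ref{thm-internal-groupoid}, which is the assembly you carry out. Nothing is missing.
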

	
	\begin{definition}[\cite{DB-antilinear}] \label{def-cooperator-central}
		Let $\C$ be a pointed category with binary products, and let $f: X \rightarrow Z$ and $g:Y \rightarrow Z$ be morphisms in $\C$. A morphism $\phi:X \times Y \rightarrow Z$ making the diagram
		\[
		\xymatrix{
			X \ar[rd]_-f\ar[r]^-{\iota_X} &  X \times Y \ar[d]|-\phi &  Y\ar[l]_-{\iota_Y} \ar[dl]^-{g}\\
			& Z 
		}
		\] 
		commute, is called a cooperator for $f$ and $g$. If $g = 1_Z$ in the diagram above, then $f$ is said to be central when such a $\phi$ exists.
	\end{definition}
    \begin{definition}[\cite{DB-antilinear}] \label{def-antilinear}
    A unital category $\C$ is said to be antilinear if the only central morphisms are the null morphisms.
    \end{definition}
	\begin{proposition} \label{thm-cooperators}
		Let $\C$ be a pointed finitely complete majority category, and let $f: X \rightarrow Z$ and $g:Y \rightarrow Z$ be morphisms in $\C$. If $f$ and $g$ admit a cooperator, then the square
		\[
		\xymatrix{
			\ker(f) \times \ker(g) \ar[d]_{p_1} \ar[r]^-{p_2} & Y \ar[d]^g \\
			X \ar[r]_f & Z
		}
		\]
		is a pullback. Where $p_1$ and $p_2$ are the canonical product projections composed with the canonical inclusions.
	\end{proposition}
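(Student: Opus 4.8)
The plan is to show that the canonical comparison morphism $u \colon \ker(f)\times\ker(g)\to P$ into the pullback $P$ of $f$ along $g$ is an isomorphism, where $u$ is induced by $p_1,p_2$ (these land in $P$ since $fp_1 = 0 = gp_2$). Writing $\pi_X,\pi_Y$ for the pullback projections and $h = f\pi_X = g\pi_Y \colon P\to Z$, the whole statement reduces to the single identity $h = 0$: once this holds, $\pi_X$ and $\pi_Y$ factor through the kernel inclusions $\ker(f)\rightarrowtail X$ and $\ker(g)\rightarrowtail Y$, yielding a morphism $v\colon P\to\ker(f)\times\ker(g)$ which one checks is a two-sided inverse of $u$ (using that $\pi_X,\pi_Y$ are jointly monomorphic and that the kernel inclusions are monomorphisms).

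To prove $h = 0$ I would feed the cooperator into the majority condition (\textbf{M}). Consider the ternary relation $R = (X\times Y,\ \pi_X,\ \pi_Y,\ \phi)$ between $X$, $Y$ and $Z$ --- the graph of $\phi$ --- whose defining triple is jointly monomorphic because its first two components already are. For morphisms $a\colon S\to X$, $b\colon S\to Y$, $c\colon S\to Z$, the triple $(a,b,c)$ is $R$-related precisely when $c = \phi(a,b)$.

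Now I work over $S = P$ and take the generalized elements $a_1 = 0$, $a_2 = \pi_X$, $b_1 = 0$, $b_2 = \pi_Y$, $c_1 = h$, $c_2 = 0$. The three premise-triples of the matrix $M$ then read $(0,0,0)$, $(0,\pi_Y,h)$ and $(\pi_X,0,h)$, and each is $R$-related: the first because $\phi\circ 0 = 0$; the second because $(0,\pi_Y) = \iota_Y\pi_Y$ and $\phi\iota_Y = g$, so $\phi(0,\pi_Y) = g\pi_Y = h$; the third symmetrically from $\phi\iota_X = f$ and $f\pi_X = h$. Applying (\textbf{M}) forces the conclusion-triple $(a_1,b_1,c_1) = (0,0,h)$ to be $R$-related, that is $h = \phi(0,0) = 0$, as required.

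The routine verifications --- that $R$ is a genuine internal relation, that the three premises match the first three columns of $M$, and that the factorizations through the kernels assemble into the inverse of $u$ --- I would relegate to a line or two. The only real choice is the assignment of generalized elements above; the decisive (and slightly counter-intuitive) point is to place $0$ in the $a_1$- and $b_1$-slots, so that the conclusion of (\textbf{M}) pins $\phi(0,0)$ down to $h$ and thereby collapses $h$ to $0$. I expect this matching step to be the only subtle part of the argument, everything else being formal.
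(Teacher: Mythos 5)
Your proposal is correct and takes essentially the same route as the paper: the paper also uses the graph of the cooperator $\phi$ as the ternary relation (realized there as the equalizer of $\pi_3$ and $\phi(\pi_1,\pi_2)$ inside $X\times Y\times Z$, rather than as your span $(X\times Y,\pi_1,\pi_2,\phi)$, which is the same subobject), and applies (\textbf{M}) with exactly your assignment of generalized elements, zeros in the $a_1$- and $b_1$-slots. The only cosmetic difference is that the paper runs the argument over an arbitrary commutative square $(\alpha,\beta)$ on the cospan, concluding $f\alpha=0=g\beta$ directly, whereas you specialize to the universal cone of the pullback $P$ and then invert the comparison map; the two formulations are equivalent.
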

    In particular, this gives that every unital majority category is antilinear in the sense Definition~\ref{def-antilinear}, as the next corollary shows.
    \begin{corollary} \label{cor-central}
If $\C$ is a pointed finitely complete majority category, then $f:X \rightarrow Y$ is central if and only if $f = 0$.
	\end{corollary}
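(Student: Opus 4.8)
The plan is to derive the corollary directly from Proposition~\ref{thm-cooperators} by specializing to the case $g = 1_Y$. By Definition~\ref{def-cooperator-central}, a morphism $f \colon X \to Y$ is central precisely when $f$ and the identity $1_Y \colon Y \to Y$ admit a cooperator, so the two implications can be handled separately.

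For the easy direction, I would show that the null morphism is always central. If $f = 0$, take $\phi \colon X \times Y \to Y$ to be the second product projection $\pi_Y$. Since $\C$ is pointed, the canonical inclusions are $\iota_X = \langle 1_X, 0\rangle$ and $\iota_Y = \langle 0, 1_Y\rangle$, and composing with $\pi_Y$ gives $\pi_Y \iota_X = 0 = f$ and $\pi_Y \iota_Y = 1_Y$. Hence $\pi_Y$ is a cooperator for $f$ and $1_Y$, so $f$ is central.

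For the converse, suppose $f$ is central, so that $f$ and $1_Y$ admit a cooperator, and apply Proposition~\ref{thm-cooperators} with $g = 1_Y$. The key computation is that $\ker(1_Y) \cong 0$, since $1_Y$ is a monomorphism and $\C$ is pointed; hence $\ker(f) \times \ker(1_Y) \cong \ker(f)$, the morphism $p_2$ becomes the null morphism $\ker(f) \to Y$, and $p_1$ becomes the kernel inclusion $k \colon \ker(f) \rightarrowtail X$. Thus Proposition~\ref{thm-cooperators} asserts that the square with $p_1 = k$, $p_2 = 0$, bottom edge $f$ and right-hand edge $1_Y$ is a pullback.

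The crux is then to observe that the pullback of $f$ along the identity $1_Y$ is, up to canonical isomorphism, $X$ itself, with $\mathrm{id}_X$ as its projection onto $X$. By uniqueness of pullbacks this comparison forces $p_1 = k \colon \ker(f) \to X$ to be an isomorphism; since $k$ is the kernel inclusion we have $f k = 0$, and $k$ being invertible yields $f = (f k) k^{-1} = 0$. I expect the only delicate point to be the bookkeeping around the identification $\ker(1_Y) \cong 0$ and the resulting simplification of the square from Proposition~\ref{thm-cooperators}; once that is in place, the pullback-against-an-identity argument closes the proof immediately.
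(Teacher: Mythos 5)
Your proof is correct and follows essentially the same route as the paper's: specialize Proposition~\ref{thm-cooperators} to $g = 1_Y$, use $\ker(1_Y) \simeq 0$ to identify the pullback of $1_Y$ along $f$ with $\ker(f)$, and compare with the canonical pullback $X$ itself to conclude that the kernel inclusion is an isomorphism, hence $f = 0$. The only difference is that you spell out the trivial direction ($f = 0$ is central via the projection $\pi_Y$), which the paper leaves implicit.
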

    \begin{proof}
	By Definition~\ref{def-cooperator-central}, $f$ being central, it cooperates with the identity on $Y$, so that by Proposition~\ref{thm-cooperators} the pullback of $1_Y$ along $f$ is given by $\ker(f) \times \ker(1_Y) \simeq \ker(f)$. This implies that the identity on $1_X$ is the kernel of $f$, so that $f = 0$.  
	\end{proof}
	\begin{proof}[Proof of Proposition~\ref{thm-cooperators}]
		Suppose that $\phi$ is a cooperator between $f$ and $g$, then it suffices to show that for any commutative square
		\[
		\xymatrix{
			A \ar[d]_{\alpha} \ar[r]^{\beta} & Y \ar[d]^g \\
			X \ar[r]_f & Z
		}
		\]
		we have $g \beta = 0 = f \alpha$. Consider the ternary relation $r: R \rightarrow X \times Y \times Z$ defined by the equalizer:
		\[
		\xymatrix{
			R \ar[r]^-r & X \times Y \times Z \ar@<-.5ex>[rr]_-{\pi_3} \ar@<.5ex>[rr]^-{\phi(\pi_1, \pi_2)} & & Z
		}
		\]
		Then since we have $\phi(\alpha, 0) = f \alpha$ and $\phi(0, \beta) = g \beta$, by the universal property of the equalizer it follows that 
		$(\alpha, 0, f\alpha): A \rightarrow X \times Y \times Z$ and $(0, \beta, g\beta): A \rightarrow X \times Y \times Z$ and $(0, 0, 0): A \rightarrow X \times Y \times Z$ are all $R$-related. Since $f \alpha = g \beta$, we have that $(0,0,f \alpha)$ is $R$-related, which implies that $f\alpha = 0 = g \beta$.
	\end{proof}
	\begin{remark}
 Corollary~\ref{cor-central} gives another way to see that every finitely complete Mal'tsev majority category is protoarithmetical. If $\C$ is a Mal'tsev majority category, then the category $\Pt_I(\C)$ of points above any object $I$ in $\C$ is unital (see \cite{DB-unital}), and a pointed majority category (see Example~\ref{example-category-of-points}). Thus $\Pt_I(\C)$ is antilinear, and therefore internal monoids in $\Pt_I(\C)$ are trivial. By Remark~\ref{rem-definition-protoarithmetical}, $\C$ is protoarithmetical.
	\end{remark}
	\subsection{Relations in regular categories}
	Recall that if $\C$ is a regular category, then we can define compositions of relations as follows. Let $(r_1,r_2): R \rightarrowtail X \times Y$ and $(s_1,s_2): Y \times Z$ be relations in $\C$, and suppose that $(P, p_1, p_2)$ is the pullback of $s_1$ along $r_2$:
	\[
	\xymatrix{
		& &P \ar[dl]_{p_1} \ar[dr]^{p_2} & & \\
		& R \ar[dr]^{r_2} \ar[dl]_{r_1} & & S \ar[dl]_{s_1}\ar[dr]^{s_2} & \\
		X & & Y & & Z
	}
	\]
	The composite $r \circ s: R \circ S \rightarrowtail X \times Z$ is a relation obtained by taking the regular image of $(r_1p_1, r_2p_2): P \rightarrow X \times Z$ as in the diagram:
	\[
	\xymatrix{
		P \ar@/_1pc/[rr]_-{(r_1p_1, r_2p_2)} \ar@{->>}[r]&  R \circ S \ar@{ >->}[r]^-{r \circ s} & X \times Z
	}
	\]
	We have the following lemma for this relation composition. 
	\begin{lemma} \label{lem-relation-composition}
		If $(x,z): S \rightarrow X \times Z$ is any morphism which factors through $R\circ S$, then there exists a regular epimorphism $\alpha: Q \rightarrow S$ and a $y:Q \rightarrow Y$ such that $(x\alpha, y):Q \rightarrow X \times Y$ factors through $R$ and $(y, z\alpha)~:~ Q ~\rightarrow ~Y \times Z$ factors through $S$. 
	\end{lemma}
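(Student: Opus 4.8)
The plan is to pull back, along the given factorization of $(x,z)$, the regular epimorphism produced by the image factorization that defines $R \circ S$, and then to extract $y$ directly from the pullback $P$ used to form the composite.

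First I would fix notation. Writing $(r\circ s)\colon R\circ S \rightarrowtail X\times Z$ for the composite relation and $e\colon P \twoheadrightarrow R\circ S$ for the regular epimorphism in its defining image factorization, we have $(r\circ s)\,e = (r_1p_1,\, s_2p_2)$, where $p_1\colon P \to R$ and $p_2\colon P \to S$ are the projections of the pullback defining $P$ and hence satisfy $r_2p_1 = s_1p_2$. Since $(x,z)$ factors through $R\circ S$, there is a morphism $h\colon S \to R\circ S$ with $(r\circ s)\,h = (x,z)$.

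Next I would form the pullback $Q$ of $e$ along $h$, with projections $\alpha\colon Q \to S$ and $\beta\colon Q \to P$ satisfying $e\beta = h\alpha$. Because $\C$ is regular, regular epimorphisms are stable under pullback, so $\alpha$ is a regular epimorphism --- this supplies the required $\alpha\colon Q \to S$. I would then define $y := r_2p_1\beta = s_1p_2\beta$, the two expressions agreeing by the identity $r_2p_1 = s_1p_2$. The morphism $p_1\beta\colon Q \to R$ witnesses that $(r_1p_1\beta,\, y)\colon Q \to X\times Y$ factors through $R$, and the morphism $p_2\beta\colon Q \to S$ witnesses that $(y,\, s_2p_2\beta)\colon Q \to Y\times Z$ factors through $S$.

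It then remains only to identify $r_1p_1\beta$ with $x\alpha$ and $s_2p_2\beta$ with $z\alpha$. This follows by composing the image factorization with $\beta$ and using $e\beta = h\alpha$:
\[
(r_1p_1\beta,\, s_2p_2\beta) = (r\circ s)\,e\beta = (r\circ s)\,h\alpha = (x,z)\alpha = (x\alpha,\, z\alpha),
\]
so that $r_1p_1\beta = x\alpha$ and $s_2p_2\beta = z\alpha$, which is exactly what is needed. The one genuine ingredient is the pullback-stability of regular epimorphisms, which is precisely the regularity assumption on $\C$; the rest is routine diagram chasing through the two defining pullbacks and the image factorization. I expect no real obstacle beyond keeping the two candidate expressions for $y$ consistent via $r_2p_1 = s_1p_2$.
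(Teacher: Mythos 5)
Your proof is correct, and it is the standard argument the paper has in mind: the paper states this lemma without proof, treating it as a routine consequence of regularity. Pulling back the regular epimorphism $e\colon P \twoheadrightarrow R\circ S$ from the image factorization along the given factorization $h\colon S \to R\circ S$, invoking pullback-stability of regular epimorphisms to get $\alpha$, and reading off $y$ from the pullback data via $r_2p_1 = s_1p_2$ is exactly the intended reasoning, and your diagram chase verifying $r_1p_1\beta = x\alpha$ and $s_2p_2\beta = z\alpha$ closes it without gaps.
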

	\begin{theorem} \label{regular-majority}
		If $\C$ is a regular Mal'tsev category such that the lattice of equivalence relations on each object is a distributive lattice, then $\C$ is a majority category.
	\end{theorem}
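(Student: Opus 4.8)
The plan is to run, at the level of internal relations, the classical argument behind the easy direction of Pixley's theorem: congruence distributivity together with permutability (the Mal'tsev condition) should let me reassemble the desired triple from the three given ones. Let $r=(r_1,r_2,r_3)\colon R_0 \rightarrowtail A\times B\times C$ be a ternary relation, and suppose $(a_1,b_1,c_2)$, $(a_1,b_2,c_1)$ and $(a_2,b_1,c_1)$ are $R$-related, witnessed by $u,v,w\colon S\to R_0$ respectively (so $r_1u=a_1,\ r_2u=b_1,\ r_3u=c_2$, and similarly for $v,w$). I would work with the three kernel-pair equivalence relations $\theta_i=\Eq(r_i)$ on $R_0$. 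The hypotheses then translate into incidences: since $r_1u=a_1=r_1v$ we get $(u,v)\in\theta_1$; since $r_2u=b_1=r_2w$ we get $(u,w)\in\theta_2$; and since $r_3v=c_1=r_3w$ we get $(v,w)\in\theta_3$. What I want to produce is a morphism $t$ with $r_1t=a_1$, $r_2t=b_1$, $r_3t=c_1$, i.e. a witness for $(a_1,b_1,c_1)$.

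First I would invoke permutability. Recall that in a regular Mal'tsev category the join of two equivalence relations coincides with their relational composite, $E\vee F=E\circ F=F\circ E$. Hence from $(u,v)\in\theta_1$ and $(v,w)\in\theta_3$ — with $v$ serving as a witness directly over $S$ — the pair $(u,w)$ factors through $\theta_1\circ\theta_3=\theta_1\vee\theta_3$. Combining this with the already-known incidence $(u,w)\in\theta_2$, the pair $(u,w)$ factors through the intersection $\theta_2\cap(\theta_1\vee\theta_3)$.

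The heart of the argument is distributivity. By hypothesis the lattice of equivalence relations on $R_0$ is distributive, so
\[
\theta_2\cap(\theta_1\vee\theta_3)=(\theta_1\cap\theta_2)\vee(\theta_2\cap\theta_3)=(\theta_1\cap\theta_2)\circ(\theta_2\cap\theta_3),
\]
the last equality being permutability again. Thus $(u,w)$ factors through the composite $(\theta_1\cap\theta_2)\circ(\theta_2\cap\theta_3)$, and I would apply Lemma~\ref{lem-relation-composition} to obtain a regular epimorphism $\alpha\colon Q\to S$ and a morphism $t\colon Q\to R_0$ such that $(u\alpha,t)$ factors through $\theta_1\cap\theta_2$ and $(t,w\alpha)$ factors through $\theta_2\cap\theta_3$. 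Reading off coordinates gives $r_1t=r_1u\cdot\alpha=a_1\alpha$, $r_2t=r_2u\cdot\alpha=b_1\alpha$ and $r_3t=r_3w\cdot\alpha=c_1\alpha$, so $t$ exhibits $(a_1,b_1,c_1)\alpha$ as $R$-related.

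The main obstacle — and the only genuinely categorical subtlety — is that this witness $t$ lives over the cover $Q$ rather than over $S$, so the conclusion must be descended along $\alpha$. Here I would observe that $(a_1,b_1,c_1)\circ\alpha=r\circ t$ factors through the monomorphism $r$; pulling $r$ back along $(a_1,b_1,c_1)\colon S\to A\times B\times C$ yields a subobject $m\colon P\rightarrowtail S$ through which $\alpha$ necessarily factors, and since a strong (in particular regular) epimorphism cannot factor through a proper subobject, $m$ is an isomorphism. Therefore $(a_1,b_1,c_1)$ itself factors through $r$, i.e. is $R$-related, which is exactly condition (\textbf{M}). I expect the only points needing care to be the two standard facts I rely on: that in a regular Mal'tsev category joins of equivalence relations are computed as relational composites, and the precise descent step asserting that a strong epimorphism through which we factor forces the pulled-back subobject to be iso.
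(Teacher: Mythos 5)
Your proposal is correct and follows essentially the same route as the paper's own proof: form the three kernel-pair congruences on $R_0$, use permutability to write the join as a composite, apply distributivity to get $(u,w)$ into $(\theta_1\cap\theta_2)\circ(\theta_2\cap\theta_3)$, invoke Lemma~\ref{lem-relation-composition} to obtain the witness over a regular-epimorphic cover, and descend along it. The only cosmetic difference is the final descent step, where the paper appeals directly to the diagonal fill-in of a regular epimorphism against the monomorphism $(r_1,r_2,r_3)$, while you phrase the same orthogonality fact via a pullback and the observation that a strong epimorphism cannot factor through a proper subobject.
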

	\begin{proof}
		Let $\C$ be a regular Mal'tsev category, such that the lattice of equivalence relations on any object in $\C$ is distributive. Recall that in a regular Mal'tsev category, the join of two congruences is given by their composition. Let 
		\[
		\xymatrix{
			& R \ar[d]|-{r_B} \ar[rd]^{r_C} \ar[ld]_{r_A}& \\
			A & B & C
		}
		\]
		be any internal ternary relation in $\C$, and let $a_1,a_2: S \rightarrow A, b_1,b_2: S \rightarrow B, c_1,c_2:S \rightarrow C$ and $a,b,c:S \rightarrow R$ be any morphisms in $\C$ such that the diagrams:
		\[
		\xymatrix{
			& R \ar[d] & & R \ar[d]  & & R \ar[d]  \\
			S \ar[ru]^{a} \ar[r]_-{(a_1,b_1,c_2)} & A \times B \times C & S \ar[ru]^{b} \ar[r]_-{(a_1,b_2,c_1)} & A \times B \times C & S \ar[ru]^{c} \ar[r]_-{(a_2,b_1,c_1)} & A \times B \times C
		}
		\]
		commute. Consider the kernel congruences $K_A, K_B, K_C$ on $R$ formed from taking the kernel pairs of $r_A, r_B, r_C$ respectively. Then $(a,c):S \rightarrow R \times R$ factors through $K_B \cap (K_A \circ K_C)$ which implies that $(a,c)$  factors through $(K_B \cap K_A) \circ (K_B \cap K_C)$. By Lemma  \ref{lem-relation-composition},  there exists a regular epimorphism $\alpha:Q \rightarrow S$ and a morphism $b: Q \rightarrow R$ such that $(a \alpha, b)$ factors through $(K_B \cap K_A)$ and $(b, c \alpha )$ factors through $(K_B \cap K_C)$. This implies that $a_1 \alpha = r_A b $ and $b_1 \alpha = r_B b$ and $c_1 \alpha =  r_C b$, and therefore we have the commutative diagram
		\[
		\xymatrix{
			Q \ar[r]^-{b} \ar[d]_{\alpha} & R \ar[d]^-{(r_A,r_B,r_C)} \\
			S \ar[r]_-{(a_1,b_1,c_1)} \ar@{..>}[ru]^-{f} & A \times B \times C
		}
		\]
		where $f$ exists, since $\alpha$ is a regular epimorphism.
	\end{proof}
	\begin{corollary} \label{arithmetical-malcev-majority}
		For a Barr exact category $\C$ the following are equivalent:
		\begin{itemize}
			\item[(1)] $\C$ is arithmetical (i.e.~Mal'tsev and congruence distributive);
			\item[(2)] $\C$ is Mal'tsev and a majority category.
		\end{itemize}
	\end{corollary}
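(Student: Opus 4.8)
The plan is to prove the equivalence as a biconditional, exploiting the fact that both directions should follow from results already established in the excerpt, with Barr-exactness supplying the bridge. The key observation is that ``arithmetical'' here means Barr-exact Mal'tsev plus congruence distributive, and by Theorem~\ref{thm-protoarithmetical-implies-arithmetical} this is equivalent (in the Barr-exact setting) to being protoarithmetical, i.e.~Mal'tsev with every internal groupoid an equivalence relation. So the real content is to connect the majority condition with congruence distributivity under the standing assumption that $\C$ is Barr-exact and Mal'tsev.

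For the direction $(2) \Rightarrow (1)$, I would argue as follows. Assume $\C$ is a Barr-exact Mal'tsev majority category. Since a Barr-exact category is in particular regular and finitely complete, Corollary~\ref{cor-maltsev-majority-protoarithmetical} applies directly: every finitely complete Mal'tsev majority category is protoarithmetical. Then by Theorem~\ref{thm-protoarithmetical-implies-arithmetical}, a Barr-exact protoarithmetical category is precisely a Mal'tsev congruence distributive category, i.e.~arithmetical. This direction is essentially immediate once the right earlier results are invoked, so I would keep it short.

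For the converse $(1) \Rightarrow (2)$, assume $\C$ is arithmetical, meaning Barr-exact, Mal'tsev, and congruence distributive. Here I would invoke Theorem~\ref{regular-majority}: a regular Mal'tsev category in which the lattice of equivalence relations on each object is distributive is a majority category. Since $\C$ is Barr-exact it is regular, it is Mal'tsev by hypothesis, and congruence distributivity is exactly the statement that the lattice of (effective) equivalence relations on each object is distributive. In a Barr-exact category every equivalence relation is effective (a congruence), so the lattice of equivalence relations coincides with the congruence lattice, and distributivity of the latter is what ``congruence distributive'' asserts. Hence Theorem~\ref{regular-majority} yields that $\C$ is a majority category, and it is Mal'tsev by assumption, giving (2).

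The main subtlety to be careful about is the precise meaning of ``congruence distributive'' versus ``the lattice of equivalence relations is distributive,'' and ensuring these coincide in the Barr-exact context. In a Barr-exact category all equivalence relations are effective, so no gap arises; I would state this identification explicitly to make the application of Theorem~\ref{regular-majority} airtight. No genuinely hard step remains — the corollary is a synthesis of Corollary~\ref{cor-maltsev-majority-protoarithmetical}, Theorem~\ref{thm-protoarithmetical-implies-arithmetical}, and Theorem~\ref{regular-majority} — so the proof is a matter of correctly chaining these results together in each direction.
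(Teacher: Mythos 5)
Your proposal is correct and follows essentially the same route as the paper: the direction $(1)\Rightarrow(2)$ via Theorem~\ref{regular-majority}, and $(2)\Rightarrow(1)$ by chaining Corollary~\ref{cor-maltsev-majority-protoarithmetical} with Theorem~\ref{thm-protoarithmetical-implies-arithmetical}. Your explicit remark that congruence distributivity coincides with distributivity of the lattice of equivalence relations in the Barr-exact setting (since all equivalence relations are effective there) is a useful clarification that the paper leaves implicit.
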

	\begin{proof}
		(1) $\Rightarrow$ (2) is immediate by Theorem~\ref{regular-majority}. For (2) $\Rightarrow$ (1) suppose that $\C$ is a Mal'tsev majority category, then by Theorem~\ref{cor-maltsev-majority-protoarithmetical} we have that $\C$ is protoarithmetical, and thus $\C$ is arithmetical by Theorem~\ref{thm-protoarithmetical-implies-arithmetical}.
	\end{proof}
	\begin{remark}
		This result was actually first announced in \cite{talk}, as a Barr exact analogue of Pixley's result for varieties \cite{pixley}.
	\end{remark}
	\noindent
	The above corollary motivates the question of whether protoarithmetical categories are, in general, the same as Mal'tsev majority categories. Or if there are naturally weaker conditions (than Barr exactness) under which ``Malt'sev + majority = arithmetical''. The rest of this section is dedicated to answering this question in the negative. We will construct a regular protoarithmetical category, with all limits and colimits, which is not a majority category. 
	
	Consider the category of ternary relations $\Rel_3$ mentioned in Example~\ref{rem-ternary-relations}. This category has as its objects pairs  $X = (U_X,R_X)$ where $U_X$ is a set and $R_X$ is a ternary relation on $U_X$.  A morphism $f:X \rightarrow Y$ in $\Rel_3$ is a function $f:U_X \rightarrow U_Y$ for which $(x,y,z) \in R_X \implies (f(x), f(y), f(z)) \in R_Y$.  The limit/colimit of a diagram $D$ in $\Rel_3$ has as its underlying set $U_L$ the set-theoretic limit/colimit  of the underlying diagram is $\Set$,  equipped with the largest/smallest relation making the canonical projections/inclusions homomorphisms. A morphism $m:A \rightarrow X$ in $\Rel_3$ is a regular monomorphism if and only if $m$ is relation-reflecting, which is to say $m$ satisfies
	\[
	(m(x),m(y),m(z)) \in R_X \implies (x,y,z) \in R_A
	\]
	for any $x,y,z \in U_A$.  It may be checked that regular monomorphisms are stable under pushout, which is an easy consequence of the fact that pushouts along monomorphisms in $\Set$ are pullbacks. Therefore we have the following lemma: 
	\begin{lemma} \label{lem-rel-is-regular} 
		The category $\Rel_3^{\op}$ is a complete and cocomplete regular category.
	\end{lemma}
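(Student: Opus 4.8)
The plan is to derive the whole statement from duality together with the explicit description of limits and colimits in $\Rel_3$ recorded just above. First I would note that $\Rel_3$ is both complete and cocomplete: the text specifies that the underlying set of a limit or colimit is computed in $\Set$ and then equipped with the largest (respectively smallest) ternary relation making the projections (respectively injections) into homomorphisms, and one checks routinely that these carry the required universal properties. Passing to the opposite category, $\Rel_3^{\op}$ is therefore also complete and cocomplete; in particular it is finitely complete and has coequalizers of kernel pairs. Since a finitely complete category that has coequalizers of kernel pairs and in which regular epimorphisms are stable under pullback is regular (the regular-epi/mono factorizations then being automatic from these data), the only substantive point remaining is the pullback-stability of regular epimorphisms in $\Rel_3^{\op}$.

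I would then translate this surviving condition back into $\Rel_3$. A regular epimorphism in $\Rel_3^{\op}$ is exactly a regular monomorphism in $\Rel_3$, and a pullback in $\Rel_3^{\op}$ is a pushout in $\Rel_3$; so the claim to verify is that regular monomorphisms are stable under pushout in $\Rel_3$. Invoking the characterization recalled above --- that $m \colon A \to X$ is a regular monomorphism precisely when it is relation-reflecting, i.e. $(m(x),m(y),m(z)) \in R_X$ forces $(x,y,z) \in R_A$ --- I would form a pushout square of such an $m$ along an arbitrary morphism $g \colon A \to B$, with legs $h \colon X \to D$ and $n \colon B \to D$, and show that the pushout leg $n$ is again relation-reflecting.

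The heart of the argument, and the step I expect to require the most care, is this last verification, which is where the two facts packaged in the preceding paragraph of the text get used: that in $\Set$ the pushout of a monomorphism along any morphism is again a monomorphism and the resulting square is a pullback, and that the pushout relation is the union $R_D = h^{\times 3}(R_X) \cup n^{\times 3}(R_B)$ of the two pushed-forward relations (there being no closure to impose, since ternary relations satisfy no axioms). Given a triple with $(n(b_1),n(b_2),n(b_3)) \in R_D$, either it already lies in $n^{\times 3}(R_B)$, in which case injectivity of $n$ yields $(b_1,b_2,b_3) \in R_B$, or it lies in $h^{\times 3}(R_X)$, say $n(b_i) = h(x_i)$ for each $i$ with $(x_1,x_2,x_3) \in R_X$; the pullback property of the square then produces $a_i \in A$ with $m(a_i) = x_i$ and $g(a_i) = b_i$, relation-reflection of $m$ gives $(a_1,a_2,a_3) \in R_A$, and applying the homomorphism $g$ yields $(b_1,b_2,b_3) \in R_B$. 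Thus $n$ reflects relations and is a regular monomorphism, which establishes pushout-stability in $\Rel_3$ and hence pullback-stability of regular epimorphisms in $\Rel_3^{\op}$. Assembling these observations shows that $\Rel_3^{\op}$ is a complete and cocomplete regular category.
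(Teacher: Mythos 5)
Your proposal is correct and follows essentially the same route as the paper: the paper also reduces everything to the explicit limit/colimit description in $\Rel_3$, the characterization of regular monomorphisms as relation-reflecting maps, and pushout-stability of regular monomorphisms via the fact that pushouts along monomorphisms in $\Set$ are pullbacks. The only difference is that the paper asserts this stability as something that "may be checked," whereas you carry out the verification in detail, and your details are accurate.
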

	\begin{remark} \label{rem-epi-regular-mono-factorizations}
		For any morphism $f:X \rightarrow Y$ in $\Rel_3$ denote $f(X)$ for the subrelation of $Y$ restricted to the set-theoretic image of $f$. Then the coimage factorization of $f$ is given by $f = me$ where $e:X \rightarrow f(X)$ is the canonical projection, and $m:f(X) \rightarrow Y$ is the canonical inclusion. 
	\end{remark}
	\begin{definition}[\cite{thomas}]
		Let $S$ be an object in a category $\C$, then $S$ is a Mal'tsev object in $\C$ if for any binary relation $r:R \rightarrow X \times Y$, the induced relation on sets
		\[
		\hom(S,R) \rightarrowtail \hom(S, X) \times \hom(S,Y)
		\]
		is difunctional. 
	\end{definition}
	\begin{remark}[\cite{thomas}]
		A topological space $S$ is a Mal'tsev object in $\Top^{\op}$ if and only if the map $f: R \rightarrow S$ defined by $f(x,x,y) = y = f(y,x,x)$ is continuous, where $R$ is the subspace generated by 
		\[
		\{(x,x,y),(y,x,x) \mid x,y \in S\}.
		\]
		This happens if and only if the space $S$ is an $R_1$-space, which is to say $S$ satisfies the separation axiom: for any $x,y \in S$ if there exists an open $U$ such that $x \in U$ and $y \notin U$, then there exists $V$ and $W$ open, such that $x \in V$ and $y \in W$, and $V \cap W = \varnothing$. Furthermore, a metric space $S$ is a Mal'tsev object in $\Met^{\op}$ if and only if it is an ultra-metric space.
	\end{remark}
	In what follows we will be concerned with Mal'tsev objects in $\Rel_3^{\op}$.
	\begin{lemma} \label{lem-maltsev-relations}
		Let $S$ be any object in $\Rel_3$, and let $M = (U_M,R_M)$ be the subrelation of $S \times S \times S$ where 
		\[
		U_M = \{(x,x,y) \mid x,y \in S\} \cup  \{(y,x,x) \mid x,y \in S\}
		\]
		and $R_M$ is the restriction of $R_{S^3}$ to $U_M$. Then $S$ is a Mal'tsev object in $\Rel_3^{\op}$ if and only if the map $f:U_M \rightarrow U_S$ defined by 
		\[
		f(x,x,y) = y = f(y,x,x)
		\]
		preserves the relation structure (is a morphism in $\Rel_3$).
	\end{lemma}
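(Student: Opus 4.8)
The plan is to read this off as the object-wise, twice-dualized counterpart of Theorem~\ref{thm-maj-characterization-images}, with the majority matrix replaced by the Mal'tsev matrix of Section~\ref{sec-def-and-examples}. Since $\Rel_3^{\op}$ is regular by Lemma~\ref{lem-rel-is-regular} it has image factorizations, and by Remark~\ref{rem-epi-regular-mono-factorizations} these are computed set-theoretically. The first step is to identify the test diagram: the Mal'tsev matrix determines a morphism $v\colon S^2+S^2\to S^3$ in $\Rel_3$, given on the first summand by $(a,b)\mapsto(a,a,b)$ and on the second by $(a,b)\mapsto(b,a,a)$. Its set-theoretic image is exactly $U_M$, so by Remark~\ref{rem-epi-regular-mono-factorizations} the coimage factorization of $v$ is $S^2+S^2\twoheadrightarrow M\rightarrowtail S^3$, where $M=(U_M,R_M)$ and the second map is the relation-reflecting inclusion. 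Being a Mal'tsev object in $\Rel_3^{\op}$ is then equivalent to the existence of a morphism $M\to S$ completing the relevant triangle, and since the coimage epimorphism is surjective on underlying sets this morphism is forced, on points, to be the stated $f$. Thus the statement reduces to deciding whether this forced $f$ is a morphism in $\Rel_3$.

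For the implication that $f$ being a morphism makes $S$ a Mal'tsev object --- the substantive direction --- I would argue directly. A relation in $\Rel_3^{\op}$ is a jointly epic cospan $X\xrightarrow{\rho_1}R_0\xleftarrow{\rho_2}Y$ in $\Rel_3$, and joint epimorphy here is exactly joint surjectivity on underlying sets. Suppose $u_1,u_2\colon X\to S$ and $v_1,v_2\colon Y\to S$ in $\Rel_3$ are such that $(u_1,v_2),(u_1,v_1),(u_2,v_1)$ are related, witnessed respectively by morphisms $h_3,h_2,h_1\colon R_0\to S$. Form $g=(h_1,h_2,h_3)\colon R_0\to S^3$. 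The key computation is that $g\rho_1=(u_2,u_1,u_1)$ and $g\rho_2=(v_1,v_1,v_2)$, which lie set-theoretically in the $(y,x,x)$- and $(x,x,y)$-parts of $U_M$; by joint surjectivity of $\rho_1,\rho_2$ the whole of $g$ factors through $U_M$. As $M\rightarrowtail S^3$ is a regular monomorphism carrying the restricted relation, $g$ corestricts to a morphism $g'\colon R_0\to M$, whence $h:=fg'$ is a morphism with $h\rho_1=u_2$ and $h\rho_2=v_2$, applying $f(y,x,x)=y$ and $f(x,x,y)=y$ pointwise. This exhibits $(u_2,v_2)$ as related, so the induced relation on $\hom$-sets is difunctional.

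For the converse --- that a Mal'tsev object forces $f$ to be a morphism --- I would dualize the easy $(1)\Rightarrow(2)$ step of Theorem~\ref{thm-maj-characterization-images}. Apply the Mal'tsev-object property of $S$ to the relation $M\rightarrowtail(2S)^2$ in $\Rel_3^{\op}$ obtained as the image of the matrix morphism, whose two legs in $\Rel_3$ are the components $S^2\to M$ of the coimage epimorphism. The three coordinate projections $\phi_1,\phi_2,\phi_3\colon M\to S$ are morphisms witnessing that the three premises of the Mal'tsev condition hold, so difunctionality produces a witness for the conclusion, i.e.\ a morphism $M\to S$ completing the coimage triangle. As already noted, the coimage epimorphism is surjective, so this morphism coincides on points with $f$; hence $f$ is a morphism.

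The main obstacle is not the computation but the bookkeeping of the two dualizations at once: translating $R$-relatedness in $\Rel_3^{\op}$ into statements about $\Rel_3$-morphisms landing in $S$, and matching the abstract premise and conclusion columns of the Mal'tsev matrix with the concrete coordinate data on $U_M\subseteq S^3$. The one genuinely load-bearing observation is that joint epimorphy of the cospan legs in $\Rel_3$ is joint surjectivity, since this is precisely what allows the combined witness $g$ to be corestricted to $M$; without it the argument in the substantive direction would break down.
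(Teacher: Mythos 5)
Your proposal is correct, and its skeleton coincides with the paper's own (very short) proof: the paper simply invokes the dual of Proposition~2.3 of Weighill's paper (the Mal'tsev analogue of Theorem~\ref{thm-maj-characterization-images}), computes the coimage factorization of the matrix morphism $2S^2 \rightarrow S^3$ concretely via Remark~\ref{rem-epi-regular-mono-factorizations}, and notes that commutativity of the bottom triangle forces $f(x,x,y)=y=f(y,x,x)$ --- which is precisely your first paragraph. Where you differ is that you then \emph{re-prove}, rather than cite, both directions of that characterization in the concrete setting of $\Rel_3$. Your substantive direction (forming $g=(h_1,h_2,h_3)$, corestricting it into $M$ using joint surjectivity of the cospan legs together with the fact that $M$ carries the restricted, hence relation-reflecting, structure, and post-composing with $f$) is exactly the nontrivial half of the cited proposition specialized to $\Rel_3^{\op}$, and your converse (the three coordinate projections of $M$ supplying the premises of difunctionality for the relation $M \rightarrowtail (2S)^2$) is its easy half; both computations check out, including the identifications $g\rho_1=(u_2,u_1,u_1)$, $g\rho_2=(v_1,v_1,v_2)$ and the fact that joint epimorphy in $\Rel_3$ is joint surjectivity (the forgetful functor to $\Set$ has a right adjoint, so epimorphisms are surjections). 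What the paper's citation buys is brevity; what your unfolding buys is a self-contained argument that makes explicit which facts carry the weight: surjectivity of the coimage epimorphism (forcing $f$ on points) and the relation-reflecting nature of the inclusion $M \rightarrowtail S^3$ (allowing the corestriction).
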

	\begin{proof}[Sketch]
		By Proposition~2.3 in \cite{thomas}, an object $S$ in $\Rel_3^{\op}$ is a Mal'tsev object if and only if there exists $f: M \rightarrow S$ making the diagram
		\[
		\xymatrix{
			S^3 & \\
			& M \ar[lu]_m \ar@{..>}[dr]^f& \\
			2S^2\ar[uu]^{\begin{pmatrix}
				\pi_2 & \pi_2 & \pi_1 \\
				\pi_1 & \pi_2 & \pi_2
				\end{pmatrix}} \ar[ur]^e \ar[rr]_{\begin{pmatrix}
				\pi_1 \\
				\pi_1
				\end{pmatrix}} & & S
		}
		\]
		in $\Rel_3$ commute, where $me$ is an image-factorization of the vertical morphism. Now by Remark~\ref{rem-epi-regular-mono-factorizations}, $M$ can be taken the be set-theoretic image of the vertical morphism, together with the restriction of $R_{S^3}$. Then
		\[
		U_M = \{(x,x,y) \mid x,y \in S\} \cup  \{(y,x,x) \mid x,y \in S\},
		\]
		and if $f$ exists it must be defined by 
		\[
		f(x,x,y) = y = f(y,x,x).
		\]
	\end{proof}
	The full subcategory of Mal'tsev objects in a category $\C$ is denoted by $\Mal(\C)$, and has the following properties (see \cite{thomas}):
	\begin{enumerate}[label=(\roman*)]
		\item $\Mal(\C)$ is closed under colimits and regular quotients in $\C$. So that in particular if $\C$ is cocomplete, then so is $\Mal(\C)$.
		\item If $\C$ is a regular well-powered category admitting coproducts, then $\Mal(\C)$ is a coreflective subcategory of $\C$.
		\item If $\C$ is a regular category with binary coproducts, such that every morphism in $\Mal(\C)$ which is a regular epimorphism in $\C$ is a regular epimorphism in $\Mal(\C)$, then $\Mal(\C)$ is the largest full subcategory of $\C$ which is Mal'tsev, and, closed under binary coproducts and regular quotients in $\C$.
	\end{enumerate}
	By Lemma~\ref{lem-rel-is-regular} and (ii) above, $\Mal(\Rel_3^{\op})$ is a coreflective subcategory of $\Rel_3^{\op}$. Explicitly, this coreflection $r: \Rel_3^{\op} \rightarrow \Mal(\Rel_3^{\op})$ acts on objects as follows: if $X$ is an object of $\Rel_3^{\op}$, then define $U_{r(X)} = U_X$, and define $R_{r(X)}$ as the smallest ternary relation $R$ on $U_X$ such that $R_X \subseteq R$ and $(U_X, R)$ is a Mal'tsev object in $\Rel_3^{\op}$. Then it can be checked that $r(X)$ is indeed a Mal'tsev object in $\Rel_3^{\op}$. If $f:X \rightarrow Y$ is a morphism in $\Rel_3^{\op}$ then we define $r(f) = f$. To summarize, we have the following lemma:
	\begin{lemma} \label{lem-coreflective}
		The functor $r: \Rel_3^{\op} \rightarrow \Mal(\Rel_3^{\op})$ is right adjoint to the inclusion functor $\iota: \Mal(\Rel_3^{\op}) \rightarrow \Rel_3^{\op}$, and for any object $X$ in $\Rel_3^{\op}$ we have $U_{r(X)} = U_X$.
	\end{lemma}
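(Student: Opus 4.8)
The plan is to verify the universal property of the counit directly. Since $\iota$ is a full inclusion, exhibiting a couniversal arrow from $\iota$ to each object $X$ of $\Rel_3^{\op}$ establishes the adjunction $\iota \dashv r$ together with the action of $r$ on objects; functoriality and naturality then follow automatically from that universal property. First I would check that $r$ is well defined on objects, i.e.\ that a smallest Mal'tsev relation containing $R_X$ exists. Using the characterization in Lemma~\ref{lem-maltsev-relations}, the class of Mal'tsev relations on the fixed set $U_X$ is closed under arbitrary intersection: a triple related in $(U_X, \bigcap_i R_i)^3$ is related in each $(U_X, R_i)^3$, so the map $f$ of Lemma~\ref{lem-maltsev-relations} sends it into each $R_i$, hence into $\bigcap_i R_i$. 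This class also contains the maximal relation $U_X^3$. Hence $R_{r(X)} := \bigcap \{ R : R_X \subseteq R,\ (U_X,R) \text{ Mal'tsev} \}$ is itself Mal'tsev and is the smallest such relation, so $r(X)$ is a well-defined Mal'tsev object with $U_{r(X)} = U_X$. Because $R_X \subseteq R_{r(X)}$, the identity function on $U_X$ is a morphism $X \to \iota r(X)$ in $\Rel_3$, equivalently a morphism $\epsilon_X : \iota r(X) \to X$ in $\Rel_3^{\op}$; this is my candidate counit.

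The universal property then reduces to a single factorization claim. A morphism $\iota M \to X$ in $\Rel_3^{\op}$ with $M$ Mal'tsev is the same as a morphism $h : X \to \iota M$ in $\Rel_3$, i.e.\ a function $h : U_X \to U_M$ preserving $R_X$. Factoring $h$ through $\epsilon_X$ amounts to showing that the same underlying function $h$ is a morphism $r(X) \to M$ in $\Rel_3$, that is, that $h$ preserves $R_{r(X)}$; uniqueness is immediate since morphisms in $\Rel_3$ are determined by their underlying functions. To prove that $h$ preserves $R_{r(X)}$ I would form the inverse-image relation $R_h = \{(x,y,z) \in U_X^3 : (h(x),h(y),h(z)) \in R_M\}$. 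Since $h$ preserves $R_X$ we have $R_X \subseteq R_h$, so by minimality of $R_{r(X)}$ it suffices to show that $(U_X, R_h)$ is a Mal'tsev object; then $R_{r(X)} \subseteq R_h$, which is exactly the assertion that $h$ preserves $R_{r(X)}$.

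The main work, and the step I expect to be the real obstacle, is showing that $(U_X, R_h)$ is Mal'tsev. Here I would apply Lemma~\ref{lem-maltsev-relations}: writing $M'_X$ for the subrelation of $(U_X,R_h)^3$ carried by $\{(x,x,y)\}\cup\{(y,x,x)\}$ and $M'_M$ for the analogous subrelation of $M^3$, the product relations are computed coordinatewise, and the map $h^3 = h\times h\times h$ restricts to a map $M'_X \to M'_M$ satisfying $h \circ f_X = f_M \circ h^3$, where $f_X, f_M$ are the two Mal'tsev maps $f(x,x,y)=y=f(y,x,x)$. Given a triple related in $R_{M'_X}$, its coordinates lie in $R_h$, so applying $h$ coordinatewise produces (by the definition of $R_h$) a triple related in $M^3$ whose entries lie in $M'_M$, hence a triple in $R_{M'_M}$. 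Since $M$ is Mal'tsev, $f_M$ preserves it, and the identity $h f_X = f_M h^3$ together with the definition of $R_h$ pushes this conclusion back to show that $f_X$ preserves the original triple. Thus $f_X$ is relation-preserving and $(U_X,R_h)$ is Mal'tsev, completing the factorization.

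This shows that $\epsilon_X$ is couniversal for every $X$, whence $r$ is right adjoint to $\iota$, and $U_{r(X)} = U_X$ holds by construction. The only genuine care needed is the bookkeeping of the coordinatewise product relations and the commuting square through $h^3$; everything else is formal.
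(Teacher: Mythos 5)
Your proof is correct, and it takes a genuinely different, more self-contained route than the paper. The paper does not verify the universal property at all: it obtains coreflectivity of $\Mal(\Rel_3^{\op})$ abstractly, by combining Lemma~\ref{lem-rel-is-regular} with the general result (ii) quoted from \cite{thomas} (that $\Mal(\C)$ is coreflective in any regular, well-powered category with coproducts), and then simply describes the coreflection $r$ explicitly --- identity on underlying sets, $R_{r(X)}$ the smallest Mal'tsev relation containing $R_X$ --- with the statement ``it can be checked that $r(X)$ is indeed a Mal'tsev object''. You instead build everything by hand: existence of the smallest Mal'tsev relation via closure of the Mal'tsev relations on a fixed set under arbitrary intersections (using Lemma~\ref{lem-maltsev-relations}), the counit as the identity function, and the factorization property via the preimage relation $R_h$, whose Mal'tsev-ness follows from the commuting identity $h f_X = f_M h^3$ together with Lemma~\ref{lem-maltsev-relations}; this preimage-relation step is the real content and it is carried out correctly. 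What your approach buys is a complete elementary proof that needs neither regularity nor well-poweredness of $\Rel_3^{\op}$, and that yields $U_{r(X)} = U_X$ by construction rather than as a separate assertion; what the paper's approach buys is brevity and placement of the lemma inside the general theory of Mal'tsev objects, at the cost of leaving to the reader exactly the verifications you supply.
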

	The above lemma implies that $\Mal(\Rel_3^{\op})$ has limits, and that the limit of any diagram  $D$ in $\Mal(\Rel_3^{\op})$ has the same underlying set as the corresponding limit of $D$ in $\Rel_3^{\op}$ --- which itself has the same underlying set as the corresponding limit in $\Set^{\op}$. This is to say that the forgetful functor $U:\Mal(\Rel_3^{\op}) \rightarrow \Set^{\op}$ preserves limits. Since every discrete relation ($X$ is discrete if $R_X = U_X \times U_X \times U_X$) is an object of $\Mal(\Rel_3^{\op})$, it will follow that a morphism in $\Mal(\Rel_3^{\op})$ is a monomorphism if and only if it is a monomorphism in $\Rel_3^{\op}$. This implies that the  forgetful functor $\Mal(\Rel_3^{\op}) \rightarrow \Set^{\op}$ reflects monos. Thus we have the following lemma:
	\begin{lemma} \label{lem-forgetful-preserves-limits-reflects-monos}
		The forgetful functor $U:\Mal(\Rel_3^{\op}) \rightarrow \Set^{\op}$ preserves limits and reflects monos.
	\end{lemma}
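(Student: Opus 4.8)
The plan is to treat the two assertions separately, leaning on the coreflection of Lemma~\ref{lem-coreflective} for the preservation of limits, and on faithfulness (sharpened by a separator argument using discrete relations) for the reflection of monomorphisms.

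First, for the preservation of limits. By Lemma~\ref{lem-coreflective} the inclusion $\iota:\Mal(\Rel_3^{\op}) \rightarrow \Rel_3^{\op}$ is left adjoint to $r$, so $\Mal(\Rel_3^{\op})$ is a coreflective subcategory. I would invoke the standard fact that in a coreflective subcategory the limit of a diagram $D$ is computed by applying the coreflector $r$ to the ambient limit of $\iota D$: writing $L = \lim \iota D$ in $\Rel_3^{\op}$, the adjunction together with the full faithfulness of $\iota$ gives, for every $X$ in $\Mal(\Rel_3^{\op})$,
\[
\Mal(\Rel_3^{\op})(X, r(L)) \cong \Rel_3^{\op}(\iota X, L) \cong \lim_j \Rel_3^{\op}(\iota X, \iota D_j) \cong \lim_j \Mal(\Rel_3^{\op})(X, D_j),
\]
so that $\lim D \cong r(L)$. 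Since $U_{r(X)} = U_X$ by Lemma~\ref{lem-coreflective}, applying $r$ leaves the underlying set unchanged, whence $U(\lim D) = U_L$. Finally, a limit in $\Rel_3^{\op}$ is a colimit in $\Rel_3$, whose underlying set is the set-theoretic colimit of the underlying diagram (that is, the limit in $\Set^{\op}$); hence $U_L \cong \lim U\iota D = \lim UD$, and $U$ preserves limits.

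Second, for the reflection of monomorphisms. The cleanest route is to observe that $U$ is faithful, since a morphism of $\Mal(\Rel_3^{\op})$ is determined by its underlying function, and a faithful functor reflects monomorphisms; thus if $U(f)$ is monic then so is $f$. To obtain the sharper statement that a morphism of $\Mal(\Rel_3^{\op})$ is monic precisely when its underlying function is, I would show that the monomorphisms of $\Rel_3^{\op}$ are exactly the morphisms with surjective underlying function --- equivalently, that the epimorphisms of $\Rel_3$ are the surjections. One inclusion is immediate from faithfulness of the forgetful functor $\Rel_3 \rightarrow \Set$; for the other, given a non-surjective morphism I would separate it by a pair of maps into the two-point discrete relation $(\{0,1\},\{0,1\}^3)$, a legitimate codomain because every function into a discrete relation is a morphism. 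Since discrete relations are Mal'tsev objects, these separating maps lie in $\Mal(\Rel_3^{\op})$, so the characterization descends to the full subcategory.

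The main obstacle I anticipate is not the reflection of monos, which is essentially formal, but the limit computation: the real content is justifying that limits in the coreflective subcategory are obtained by applying $r$ to the ambient limit, and then tracking the underlying set consistently through the three categories $\Mal(\Rel_3^{\op})$, $\Rel_3^{\op}$ and $\Set^{\op}$. The identity $U_{r(X)} = U_X$ from Lemma~\ref{lem-coreflective} is exactly what prevents the coreflector from disturbing the underlying set, and is the linchpin of the argument.
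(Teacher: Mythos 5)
Your proposal is correct and follows essentially the same route as the paper: limits are handled via the coreflection of Lemma~\ref{lem-coreflective}, whose identity $U_{r(X)}=U_X$ keeps underlying sets fixed through the three categories, and monomorphisms are handled by separating maps into discrete relations, which lie in $\Mal(\Rel_3^{\op})$. Your extra observation that reflection of monos already follows from faithfulness of $U$ is a valid small shortcut; the discrete-relation argument (which the paper relies on) is what yields the sharper statement that monos in $\Mal(\Rel_3^{\op})$ are exactly the morphisms whose underlying function is surjective.
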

	\begin{proposition}
		The category $\Mal(\Rel_3^{\op})$ is a complete and cocomplete regular protoarithmetical category. 
	\end{proposition}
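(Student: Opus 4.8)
The plan is to assemble the statement from the structural lemmas already in place, treating (co)completeness and the groupoid condition as essentially formal consequences of the coreflection and of the forgetful functor, and isolating regularity as the one genuinely computational point. Cocompleteness is immediate from property~(i) above: $\Mal(\Rel_3^{\op})$ is closed under colimits in $\Rel_3^{\op}$, which is cocomplete by Lemma~\ref{lem-rel-is-regular}, so colimits in $\Mal(\Rel_3^{\op})$ are computed in $\Rel_3^{\op}$. Completeness follows from the coreflection of Lemma~\ref{lem-coreflective}: given a diagram $D$ in $\Mal(\Rel_3^{\op})$, one forms its limit $L$ in $\Rel_3^{\op}$ and then $r(L)$ is its limit in $\Mal(\Rel_3^{\op})$, the coreflector $r$ being a right adjoint and hence limit-preserving (this is the content already extracted in the discussion preceding Lemma~\ref{lem-forgetful-preserves-limits-reflects-monos}).

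For the Mal'tsev property I would verify the hypothesis of property~(iii) for $\C = \Rel_3^{\op}$, namely that a morphism of $\Mal(\Rel_3^{\op})$ which is a regular epimorphism in $\Rel_3^{\op}$ is already a regular epimorphism in $\Mal(\Rel_3^{\op})$. The key observation is that for any object $K$ the counit $\varepsilon_K : \iota r(K) \to K$ has underlying identity map (since $U_{r(K)} = U_K$ by Lemma~\ref{lem-coreflective}), hence is an epimorphism in $\Rel_3^{\op}$, the underlying-set functor $\Rel_3^{\op}\to\Set^{\op}$ being faithful. Writing a regular epimorphism $e$ as the coequalizer of its kernel pair and comparing the kernel pair computed in $\Mal(\Rel_3^{\op})$ (which is $r$ applied to the kernel pair $K$ in $\Rel_3^{\op}$) with $K$ itself through the epic $\varepsilon_K$, while using that $\iota$ preserves colimits, one sees the two coequalizers agree; this identifies regular epimorphisms on the two sides. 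Property~(iii) then yields that $\Mal(\Rel_3^{\op})$ is a Mal'tsev category.

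Regularity is where I expect the main obstacle. Every morphism $f$ of $\Mal(\Rel_3^{\op})$ factors in $\Rel_3^{\op}$ as $X \twoheadrightarrow I \rightarrowtail Y$; by property~(i) the image $I$ again lies in $\Mal(\Rel_3^{\op})$, the monomorphism is reflected by the full inclusion, and the regular epimorphism is a regular epimorphism of $\Mal(\Rel_3^{\op})$ by the previous paragraph, so the $(\text{regular epi},\text{mono})$ factorization is inherited. What remains is stability of regular epimorphisms under pullback, and here one cannot simply compose, because although the counit $\varepsilon_P$ is epic it need not be a regular epimorphism: the coreflector $r$ strictly enlarges the relation. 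Instead I would argue through the explicit description of the factorization system in Remark~\ref{rem-epi-regular-mono-factorizations}: a regular epimorphism of $\Rel_3^{\op}$ is the reversal of a relation-reflecting (full) subrelation inclusion, pullbacks in $\Mal(\Rel_3^{\op})$ have the same underlying sets as in $\Rel_3^{\op}$ (and so in $\Set^{\op}$, i.e.\ pushouts in $\Set$ along injections), and one checks directly that pulling back such a map and reapplying $r$ produces again a full subrelation inclusion. This hands-on verification, parallel to the pushout-stability check for $\Rel_3^{\op}$ invoked in Lemma~\ref{lem-rel-is-regular}, is the crux.

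Finally, for the protoarithmetical conclusion it remains, by Definition~\ref{def-protoarithmetical} together with the Mal'tsev property and finite completeness above, to show that every internal groupoid in $\Mal(\Rel_3^{\op})$ is an equivalence relation. Here I would use the forgetful functor $U : \Mal(\Rel_3^{\op}) \to \Set^{\op}$ of Lemma~\ref{lem-forgetful-preserves-limits-reflects-monos}. Since $U$ preserves limits it carries an internal groupoid $G$, whose structure is defined by finite limits, to an internal groupoid $U(G)$ in $\Set^{\op}$; as $\Set^{\op}$ is the dual of a topos it is arithmetical (see the introduction and the example following Corollary~\ref{arithmetical-malcev-majority}), hence protoarithmetical, so $U(G)$ is an equivalence relation, i.e.\ $\langle U d_0, U d_1\rangle$ is a monomorphism. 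Using again that $U$ preserves products and reflects monomorphisms, $\langle d_0, d_1\rangle : G_1 \to G_0 \times G_0$ is a monomorphism in $\Mal(\Rel_3^{\op})$, so $G$ is jointly monic and therefore an internal equivalence relation. Combined with regularity and (co)completeness, this establishes the proposition.
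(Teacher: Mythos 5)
Your overall architecture is the same as the paper's: cocompleteness from closure under colimits, completeness via the coreflector, the Mal'tsev property via property~(iii), inheritance of (regular epi, mono)-factorizations, and the groupoid condition via the limit-preserving, mono-reflecting forgetful functor to $\Set^{\op}$. Your two deviations in detail are both sound: the kernel-pair-plus-counit argument (in place of the paper's appeal to discrete relations) for showing that regular epimorphisms of $\Rel_3^{\op}$ between Mal'tsev objects are regular epimorphisms of $\Mal(\Rel_3^{\op})$, and the appeal to arithmeticity of $\Set^{\op}$ (in place of Theorem~\ref{thm-internal-groupoid} applied to the majority category $\Set^{\op}$) for the groupoid condition. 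The genuine gap is exactly where you placed it: pullback stability of regular epimorphisms. You are right that it does not follow formally --- pullbacks in $\Mal(\Rel_3^{\op})$ are obtained by applying $r$ to pullbacks in $\Rel_3^{\op}$, and the counit is epic but not regular epic --- and you are right that the paper's own proof passes over this point in silence. But the ``direct check'' to which you then defer the whole question is not routine; it is false, so your proof cannot be completed along the proposed lines.

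Concretely: in $\Rel_3$ take $Z = (\{0,1\}, \varnothing)$, $X = (\{a,b,0,1\}, \{(a,b,0),(a,b,1)\})$, $Y = (\{c,d,0,1\}, \{(c,d,0)\})$, with $m: Z \to X$ and $h: Z \to Y$ the inclusions. All three satisfy the condition of Lemma~\ref{lem-maltsev-relations}: for $X$, rows lying in $R_X$ force $A = (a,a,a)$, $B = (b,b,b)$ and $C \in \{0,1\}^3$, whence the conclusion $(a,b,f(C))$ lies in $R_X$; similarly for $Y$; vacuously for $Z$. Thus all three are objects of $\Mal(\Rel_3^{\op})$, and $m$ is a relation-reflecting injection, so $m^{\op}: X \to Z$ is a regular epimorphism of $\Rel_3^{\op}$ and hence, by the identification you prove in your second paragraph, of $\Mal(\Rel_3^{\op})$. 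The pullback of $m^{\op}$ along $h^{\op}$ in $\Mal(\Rel_3^{\op})$ is $r(P)$, where $P$ is the pushout in $\Rel_3$, so $U_P = \{a,b,c,d,0,1\}$ and $R_P = \{(a,b,0),(a,b,1),(c,d,0)\}$. Now apply the condition of Lemma~\ref{lem-maltsev-relations} to $A = (a,a,c)$, $B = (b,b,d)$, $C = (1,0,0)$ in $U_M$: the three rows $(a,b,1)$, $(a,b,0)$, $(c,d,0)$ all lie in $R_P$, so \emph{every} Mal'tsev relation containing $R_P$ must contain $(f(A),f(B),f(C)) = (c,d,1)$; in particular $(c,d,1) \in R_{r(P)}$. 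Since $(c,d,1) \notin R_Y$, the projection $r(P) \to Y$ --- whose underlying $\Rel_3$-map is the inclusion of $Y$ into $r(P)$ --- is not relation-reflecting, hence not a regular epimorphism of $\Rel_3^{\op}$, hence (regular epimorphisms of $\Mal(\Rel_3^{\op})$ being regular epimorphisms of $\Rel_3^{\op}$, by closure under colimits) not a regular epimorphism of $\Mal(\Rel_3^{\op})$. So pulling back and reapplying $r$ does \emph{not} return a full subrelation inclusion: the coreflector genuinely spoils relation-reflection. The parts of your argument not involving regularity (completeness, cocompleteness, the Mal'tsev property, and protoarithmeticity, which by Definition~\ref{def-protoarithmetical} needs only finite completeness) stand; but the regularity claim requires an argument of an entirely different kind, and the computation above indicates that the obstruction is not one of presentation --- taking Lemma~\ref{lem-maltsev-relations} at face value, it is an obstruction to pullback stability itself.
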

	\begin{proof}
		Again, since $\Mal(\Rel_3^{\op})$ contains all discrete relations, it will follow that every morphism in $\Mal(\Rel_3^{\op})$ which is a regular epimorphism in $\Rel_3^{\op}$ is also a regular epimorphism in $\Mal(\Rel_3^{\op})$. Moreover, since $\Mal(\Rel_3^{\op})$ is coreflective, it follows that a morphism in $\Mal(\Rel_3^{\op})$ is a regular epi if and only if it is a regular epi in $\Rel_3^{\op}$. Therefore, since $\Rel_3^{\op}$ is regular, so is $\Mal(\Rel_3^{\op})$. Also, by (iii) above, it follows that $\Mal(\Rel_3^{\op})$ is a Mal'tsev category, and by (i) it is cocomplete. By Lemma~\ref{lem-coreflective},  $\Mal(\Rel_3^{\op})$ inherits its completeness from $\Rel_3^{\op}$.   Next, we show that  any internal groupoid in $\Mal(\Rel_3^{\op})$ is an equivalence relation. Suppose that $G$ is an internal groupoid in $\Mal(\Rel_3^{\op})$, where $G_1$ is the object of arrows and $d_0,d_1:G_1 \rightarrow G_0$ the domain and codomain morphisms respectively. By Lemma~\ref{lem-forgetful-preserves-limits-reflects-monos}, the forgetful functor $U:\Mal(\Rel_3^{\op}) \rightarrow \Set^{\op}$ preserves limits, so that $UG$ is an internal groupoid in $\Set^{\op}$ --- which is a majority category. Thus, $U(d_0,d_1)$ is a monomorphism by Theorem~\ref{thm-internal-groupoid}, and thus $(d_0,d_1)$ is a monomorphism since $U$ reflects monos. 
	\end{proof}
	\begin{definition}
An object $S$ in a category $\C$ is a majority object if for every internal ternary relation $R = (R_0,r_1,r_2,r_3)$ the induced relation on sets
\[
\hom(S,R_0) \rightarrow \hom(S,A) \times \hom(S,B) \times \hom(S,C)
\]
is strictly $M$-closed with $M$ the matrix in Definition~\ref{def-majority}. The full subcategory of majority objects in $\C$ is denoted by $\Maj(\C)$.
\end{definition}
The proposition below is an analogue of Theorem~\ref{thm-maj-characterization-images} for majority objects. The proof is similar to the proof of Theorem~\ref{thm-maj-characterization-images}, and is the analogue of Proposition~2.3 in \cite{thomas} for majority objects. 
\begin{proposition}
Let $\C$ be a category with binary products, binary coproducts and image factorizations. Then for an object $S$ the following are equivalent: 
	\begin{itemize}
	\item [(1)] $S$ is a majority object.
	\item [(2)]  There exists a morphism $f:S \rightarrow R$ making the diagram
	\[
	\xymatrix{
		3S \ar[rd]^e\ar[dd]_{M =\begin{pmatrix}
			\iota_1& \iota_1& \iota_2 \\
			\iota_1& \iota_2& \iota_1 \\
			\iota_2& \iota_1& \iota_1
			\end{pmatrix}} & & \\
		& R \ar@{>->}[dl]^r\\
		(2S)^3 & & S \ar[ll]^{(\iota_1, \iota_1, \iota_1)} \ar@{..>}[lu]_f
	}
	\]
	commute, where $ M = r e $ is an image factorization.
\end{itemize}
\end{proposition}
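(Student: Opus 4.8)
The plan is to adapt the proof of Theorem~\ref{thm-maj-characterization-images} essentially verbatim, the guiding observation being that \emph{both} directions of that argument invoke the closedness condition (\textbf{M})---and, for the converse, the factorization assumed in~(2)---only with a \emph{single} object in the role of the common domain $S$ of the test morphisms $a_i,b_i,c_i$. Consequently, what is to be proved here is exactly the localization of Theorem~\ref{thm-maj-characterization-images} to the fixed object $S$: that $\hom(S,-)$ sends every ternary relation to a strictly $M$-closed relation of sets precisely when the morphism $f$ of~(2) exists. I will therefore treat $S$ as fixed throughout and simply follow the two implications.

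For (1) $\Rightarrow$ (2), I would form the image factorization $M = re$ with $r: R \rightarrowtail (2S)^3$ and regard $R$ as a ternary relation between the three copies $A = B = C = 2S$. Setting $a_1 = b_1 = c_1 = \iota_1$ and $a_2 = b_2 = c_2 = \iota_2$, the three coproduct injections $S \rightarrow 3S$ composed with $e$ exhibit the columns $(\iota_1,\iota_1,\iota_2)$, $(\iota_1,\iota_2,\iota_1)$ and $(\iota_2,\iota_1,\iota_1)$ of $M$ as $R$-related, since postcomposing such a composite with $r$ returns the corresponding column of $M = re$. As $S$ is a majority object, strict $M$-closedness of the induced relation $\hom(S,R) \rightarrow \hom(S,2S)^3$ delivers a morphism $f: S \rightarrow R$ with $rf = (\iota_1,\iota_1,\iota_1)$, which is exactly the factorization required in~(2).

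For (2) $\Rightarrow$ (1), I would take an arbitrary ternary relation $r': R' \rightarrowtail A \times B \times C$ together with morphisms $a_1,a_2: S \rightarrow A$, $b_1,b_2: S \rightarrow B$, $c_1,c_2: S \rightarrow C$ and factorizations witnessing that $(a_1,b_1,c_2)$, $(a_1,b_2,c_1)$ and $(a_2,b_1,c_1)$ are $R'$-related. Copairing these three factorizations along $3S = S + S + S$ yields a morphism $3S \rightarrow R'$, and one checks---just as in Theorem~\ref{thm-maj-characterization-images}---that postcomposing $M$ with $[a_1,a_2] \times [b_1,b_2] \times [c_1,c_2]: (2S)^3 \rightarrow A \times B \times C$ recovers the copaired matrix. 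Since $e$ is a strong epimorphism and $r'$ a monomorphism, the resulting commutative square admits a diagonal fill-in $\beta: R \rightarrow R'$ with $r'\beta = \big([a_1,a_2] \times [b_1,b_2] \times [c_1,c_2]\big) r$. Composing with the $f: S \rightarrow R$ granted by~(2) then gives $r'\beta f = \big([a_1,a_2] \times [b_1,b_2] \times [c_1,c_2]\big)(\iota_1,\iota_1,\iota_1) = (a_1,b_1,c_1)$, so $(a_1,b_1,c_1)$ is $R'$-related and $S$ is a majority object.

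I do not expect a genuine obstacle: the argument is bookkeeping, and the only step of real content is the diagonal fill-in producing $\beta$, which is where the hypotheses on $\C$ (image factorizations, strong-epi/mono orthogonality) are actually consumed. The point deserving the most care is purely definitional---verifying that strict $M$-closedness of $\hom(S,R_0) \rightarrow \hom(S,A)\times\hom(S,B)\times\hom(S,C)$ is literally condition (\textbf{M}) read with $S$ as the fixed test object---since it is precisely this single-object reading that both halves of the proof exploit.
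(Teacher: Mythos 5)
Your proposal is correct and takes essentially the same approach as the paper: the paper omits the proof of this proposition, saying only that it is similar to the proof of Theorem~\ref{thm-maj-characterization-images}, and your argument is precisely that proof localized to the fixed object $S$ --- the same image factorization $M = re$, the same diagonal fill-in $\beta$ against the mono $r'$, and the same final composite $r'\beta f = (a_1,b_1,c_1)$. Nothing further is needed.
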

As an easy application of the above proposition to $\Rel_3^{\op}$, we have the following lemma for majority objects,  which corresponds to Lemma~\ref{lem-maltsev-relations} for Mal'tsev objects.
\begin{lemma} \label{majority-in-rel}
A ternary relation $S$ is a majority object in $\Rel_3^{\op}$ if and only if the map $f:U_N \rightarrow U_S$ defined by $f(x,x,y) = f(x,y,x) = f(y,x,x) = x$ is a morphism in $\Rel_3$ where
\[
U_N = \{(x,x,y) \mid x,y \in S\} \cup \{(x,y,x) \mid x,y \in S\} \cup \{(y,x,x) \mid x,y \in S\}
\]
and $R_N$ is the restriction of $R_{S^3}$ to $U_N$.
\end{lemma}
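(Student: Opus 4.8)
The plan is to mirror the sketch of Lemma~\ref{lem-maltsev-relations} almost verbatim, replacing the Mal'tsev matrix by the majority matrix of Definition~\ref{def-majority}. The engine is the proposition stated just above, which characterizes majority objects in any category having binary products, binary coproducts and image factorizations. So the first step is to check that $\Rel_3^{\op}$ meets these hypotheses: by Lemma~\ref{lem-rel-is-regular} it is complete and cocomplete, hence has binary products and coproducts, and it is regular, hence has image factorizations. Thus the proposition applies with $\C = \Rel_3^{\op}$.

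Second, I would dualize the proposition's diagram into $\Rel_3$, exactly as in the comajority diagram displayed after Theorem~\ref{thm-maj-characterization-images}: coproducts in $\Rel_3^{\op}$ become products in $\Rel_3$, so $3S$ becomes $S^3$ and $(2S)^3$ becomes $3S^2$, the monomorphism $r$ becomes a regular epimorphism $e$, and the characterization reads: $S$ is a majority object in $\Rel_3^{\op}$ if and only if there is a morphism $f:N \to S$ in $\Rel_3$ with $f e = \begin{pmatrix}\pi_1\\\pi_1\\\pi_1\end{pmatrix}$, where $me$ is a co-image factorization in $\Rel_3$ of the vertical morphism
\[
\begin{pmatrix} \pi_1 & \pi_1 & \pi_2 \\ \pi_1 & \pi_2 & \pi_1 \\ \pi_2 & \pi_1 & \pi_1 \end{pmatrix} : 3S^2 \longrightarrow S^3 .
\]

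Third, I would compute this co-image factorization concretely using Remark~\ref{rem-epi-regular-mono-factorizations}: in $\Rel_3$ the co-image of a morphism is its set-theoretic image carrying the restricted relation. Reading the three columns of the matrix as the three coproduct components, the vertical morphism sends the respective copies of $S^2$ by $(x,y) \mapsto (x,x,y)$, $(x,y)\mapsto(x,y,x)$ and $(x,y)\mapsto(y,x,x)$, so its set-theoretic image is precisely $U_N$, and one may take $N = (U_N, R_N)$ with $e$ the canonical projection and $m$ the inclusion. Chasing the bottom triangle $f e = \begin{pmatrix}\pi_1\\\pi_1\\\pi_1\end{pmatrix}$ then forces any such $f$ to satisfy $f(x,x,y)=f(x,y,x)=f(y,x,x)=x$; conversely this formula does define a function $U_N \to U_S$, the only point needing a remark being well-definedness on the pairwise overlaps of the three pieces, which all lie on the diagonal $\{(x,x,x)\}$ where the three formulas agree. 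Hence such an $f$ exists as a morphism of $\Rel_3$ if and only if it preserves the relation $R_N \to R_S$, which is exactly the stated condition.

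I expect no genuine obstacle, since the content is entirely a translation of the Mal'tsev argument; the one place to be careful is the bookkeeping of the dualization, namely making sure that an image factorization in $\Rel_3^{\op}$ is correctly identified with the co-image factorization in $\Rel_3$ described by Remark~\ref{rem-epi-regular-mono-factorizations}, and that the three columns of the majority matrix are matched to the three coproduct summands in the right order, so that the image comes out as the stated $U_N$ rather than a permuted variant of it.
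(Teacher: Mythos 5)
Your proposal is correct and is exactly the argument the paper intends: the paper states this lemma as ``an easy application'' of the majority-object analogue of Proposition~2.3 of \cite{thomas}, dualized via the co-image factorizations of Remark~\ref{rem-epi-regular-mono-factorizations}, precisely mirroring the sketch of Lemma~\ref{lem-maltsev-relations}. Your additional checks (that $\Rel_3^{\op}$ satisfies the hypotheses via Lemma~\ref{lem-rel-is-regular}, and that the forced formula for $f$ is well-defined on the overlaps, which lie on the diagonal) are exactly the bookkeeping the paper leaves to the reader.
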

The full subcategory $\Maj(\C)$ behaves analogously to $\Mal(\C)$, and in particular, we have the proposition below. The proof is left out, as it is a straightforward adaptation of the proof of Corollary~2.4 in \cite{thomas}. 
\begin{proposition}
Let $\C$ be a regular category admitting binary coproducts. If $\D$ is a full subcategory of $\C$ which is a majority category, and closed under binary coproducts and regular quotients in $\C$, then $\D \subseteq \Maj(\C)$.
\end{proposition}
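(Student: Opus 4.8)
The plan is to apply the characterization of majority objects proved above (the analogue of Theorem~\ref{thm-maj-characterization-images} for majority objects) to the category $\C$ itself. Since $\C$ is regular it has $(\text{regular epi}, \text{mono})$-factorizations, hence image factorizations, and being regular it is finitely complete and so has binary products; together with the hypothesis that $\C$ has binary coproducts, this means that characterization applies to every object of $\C$. Thus, to prove that a fixed $S \in \D$ is a majority object in $\C$, it suffices to produce a morphism $f : S \rightarrow R$ with $rf = (\iota_1,\iota_1,\iota_1)$, where $M = re$ is the image factorization in $\C$ of the matrix morphism $M : 3S \rightarrow (2S)^3$, with $e : 3S \twoheadrightarrow R$ the (regular) epimorphism and $r : R \rightarrowtail (2S)^3$ the monomorphism.

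The first key step is to observe that this relation $R$ lives inside $\D$. Indeed, $2S = S+S$ and $3S = (S+S)+S$ are built from binary coproducts, so they lie in $\D$ by closure under binary coproducts; and $R$ is a regular quotient of $3S$ via $e$, so $R \in \D$ by closure under regular quotients. Writing $r = (r_A,r_B,r_C)$ with $r_A,r_B,r_C : R \rightarrow 2S$ the three components, each $r_j$ is a morphism of $\D$ by fullness, and since $r$ is a monomorphism in $\C$ the triple $(r_A,r_B,r_C)$ is jointly monomorphic in $\D$ as well. Hence $(R,r_A,r_B,r_C)$ is a genuine internal ternary relation in $\D$ between $2S$, $2S$ and $2S$. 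Note that the ambient product $(2S)^3$ need not belong to $\D$; we only ever use the three component morphisms into $2S$, which is precisely why closure under products is not required.

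The second step is to feed the three ``generic'' triples coming from the rows of the matrix into the majority condition (\textbf{M}) for $\D$. Let $j_1,j_2,j_3 : S \rightarrow 3S$ be the coproduct inclusions; then $Mj_1 = (\iota_1,\iota_1,\iota_2)$, $Mj_2 = (\iota_1,\iota_2,\iota_1)$ and $Mj_3 = (\iota_2,\iota_1,\iota_1)$, and since $M = re$ the morphisms $ej_1, ej_2, ej_3 : S \rightarrow R$ witness that $(\iota_1,\iota_1,\iota_2)$, $(\iota_1,\iota_2,\iota_1)$ and $(\iota_2,\iota_1,\iota_1)$ are $R$-related in $\D$. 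Applying (\textbf{M}) to this relation with $a_1 = b_1 = c_1 = \iota_1$ and $a_2 = b_2 = c_2 = \iota_2$ as morphisms $S \rightarrow 2S$, the three hypothesised related triples are exactly these three, so (\textbf{M}) yields that $(a_1,b_1,c_1) = (\iota_1,\iota_1,\iota_1)$ is $R$-related. The witnessing morphism is precisely the desired $f : S \rightarrow R$ with $rf = (\iota_1,\iota_1,\iota_1)$.

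There is no serious obstacle beyond bookkeeping: the only points requiring care are the transfer of joint monomorphy from $\C$ to the full subcategory $\D$ (immediate from fullness) and the recognition that we genuinely never need the product $(2S)^3$ to lie in $\D$, only the objects $2S$ together with the three projections of $r$. Once $f$ is produced, the characterization of majority objects gives that $S$ is a majority object in $\C$, and since $S \in \D$ was arbitrary we conclude $\D \subseteq \Maj(\C)$.
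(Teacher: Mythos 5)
Your proof is correct and is essentially the argument the paper intends: the paper omits the proof, describing it as a straightforward adaptation of Corollary~2.4 of \cite{thomas}, and your argument---applying the preceding image-factorization characterization of majority objects, using closure under binary coproducts and regular quotients to place $R$ in $\D$, and then invoking condition (\textbf{M}) in $\D$ for the relation $(R,r_A,r_B,r_C)$---is exactly that adaptation. The two points you flag (joint monomorphy passing to the full subcategory, and never needing the product $(2S)^3$ to lie in $\D$) are indeed the only delicate points, and you handle both correctly.
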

	\begin{proposition}
		$\Mal(\Rel_3^{\op})$ is not a majority category.
	\end{proposition}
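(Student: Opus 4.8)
The goal is to exhibit a concrete object of $\Mal(\Rel_3^{\op})$ that fails to be a majority object in $\Mal(\Rel_3^{\op})$, and then argue that this failure cannot be fixed. The natural strategy is to use the characterization of majority objects from Lemma~\ref{majority-in-rel} together with the characterization of Mal'tsev objects from Lemma~\ref{lem-maltsev-relations}, and to compare the two. A ternary relation $S$ lies in $\Mal(\Rel_3^{\op})$ precisely when the partial Mal'tsev operation $f(x,x,y)=y=f(y,x,x)$ on $U_M$ preserves $R_S$, whereas to be a majority object it would need the majority operation $g(x,x,y)=g(x,y,x)=g(y,x,x)=x$ on the larger set $U_N$ to preserve $R_S$. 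These are genuinely different closure conditions, so the plan is to find an $S$ satisfying the former but violating the latter.

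\textbf{Constructing the counterexample.} First I would write out explicitly what it means for the majority map to be a morphism: given a triple in $R_N$ of the form $((x_1,x_1,y_1),(x_2,y_2,x_2),(y_3,x_3,x_3))$ — one point from each of the three pieces of $U_N$ — lying in the restriction of $R_{S^3}$, i.e.\ with $(x_1,x_2,y_3)\in R_S$, $(x_1,y_2,x_3)\in R_S$ and $(y_1,x_2,x_3)\in R_S$, the majority map forces $(x_1,x_2,x_3)\in R_S$. So $S$ is a majority object exactly when this implication holds for $R_S$, which is nothing but condition~(\textbf{M}) applied to the relation $R_S$ itself viewed through the diagonal. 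I would then pick a small finite set $U_S$ and a ternary relation $R_S$ that is closed under the Mal'tsev combination from Lemma~\ref{lem-maltsev-relations} but for which one can hand-pick elements witnessing a failure of the majority implication above. A three- or four-element set with a carefully chosen $R_S$ should suffice; the Mal'tsev-closure requirement is weaker and easier to arrange, so there is room to violate majority.

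\textbf{Passing from $\Rel_3^{\op}$ to $\Mal(\Rel_3^{\op})$.} The delicate point — and what I expect to be the main obstacle — is that majority objects must be computed \emph{inside} $\Mal(\Rel_3^{\op})$, not inside $\Rel_3^{\op}$: the relevant hom-functors and the ambient ternary relations live in the coreflective subcategory, so I cannot directly apply Lemma~\ref{majority-in-rel} (which is stated for $\Rel_3^{\op}$). Here I would use Lemma~\ref{lem-coreflective} and Lemma~\ref{lem-forgetful-preserves-limits-reflects-monos}: since $U_{r(X)}=U_X$ and the forgetful functor preserves limits and reflects monos, the relevant relation object built from $S$ inside $\Mal(\Rel_3^{\op})$ has the same underlying set as in $\Rel_3^{\op}$, but its relation is the \emph{Mal'tsev coreflection} $R_{r(S^3)}$, the smallest Mal'tsev-closed relation containing $R_{S^3}$. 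Thus the majority condition to be checked for $S$ in $\Mal(\Rel_3^{\op})$ is exactly the implication above but with $R_S$ replaced by (the restriction of) the coreflected relation. The crux is therefore to choose $S$ so that the three hypothesis-triples already lie in the \emph{original} $R_{S^3}$ (hence certainly in its coreflection), while the conclusion triple $(x_1,x_2,x_3)$ stays \emph{outside} the coreflection. Establishing this requires understanding the Mal'tsev coreflection concretely enough to prove a non-membership — i.e.\ to show that iterating the partial Mal'tsev operation $f(x,x,y)=y=f(y,x,x)$ starting from $R_{S^3}$ never produces the offending triple. I would organize this as an explicit description of the closure operator and an invariant (some property preserved by every Mal'tsev combination but failed by $(x_1,x_2,x_3)$) that certifies the triple is never generated. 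Once that invariant is in place, Lemma~\ref{majority-in-rel} applied in the coreflected setting yields that the majority map fails to be a morphism, so $S$ is not a majority object and $\Mal(\Rel_3^{\op})$ is not a majority category.
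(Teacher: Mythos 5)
Your high-level plan (produce an object of $\Mal(\Rel_3^{\op})$ that is Mal'tsev but not majority, by comparing Lemma~\ref{lem-maltsev-relations} with Lemma~\ref{majority-in-rel}) is the same as the paper's, but the proposal is missing both of the ingredients that actually constitute the proof. The first gap is that the counterexample is never constructed: you defer it with ``a three- or four-element set with a carefully chosen $R_S$ should suffice.'' This is not routine bookkeeping --- the entire content of the proposition is that Mal'tsev-closedness of a ternary relation does \emph{not} force majority-closedness, so the existence of such an $S$ is exactly what must be exhibited and verified. The paper gives it explicitly: $U_S = \{0,1\}$ with $R_S = \{(1,1,0),(0,1,1),(0,0,0)\}$. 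One checks the condition of Lemma~\ref{lem-maltsev-relations} (a finite verification), while for the majority map $f$ the triple $\big((1,0,0),(1,1,0),(0,1,0)\big)$ lies in $R_{S^3}$ --- its coordinate-triples are $(1,1,0)$, $(0,1,1)$, $(0,0,0)$, all in $R_S$ --- yet $\big(f(1,0,0),f(1,1,0),f(0,1,0)\big) = (0,1,0) \notin R_S$, so $S$ is not a majority object in $\Rel_3^{\op}$.

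The second gap concerns the reduction step, which you rightly identify as the delicate point but then resolve incorrectly. The paper handles it with the proposition immediately preceding this one: if $\D$ is a full subcategory of a regular category $\C$ with binary coproducts, and $\D$ is a majority category closed under binary coproducts and regular quotients in $\C$, then $\D \subseteq \Maj(\C)$. Applied to $\D = \Mal(\Rel_3^{\op})$, this reduces everything to exhibiting a Mal'tsev object that fails to be a majority object \emph{in the ambient category} $\Rel_3^{\op}$, where Lemma~\ref{majority-in-rel} applies verbatim and no coreflection ever enters. Your alternative --- recomputing the test relation inside $\Mal(\Rel_3^{\op})$ --- also misplaces where the coreflection matters: the object $S^3$ of Lemma~\ref{majority-in-rel} is (dually) the coproduct $3S$ in $\Rel_3^{\op}$, and $\Mal(\Rel_3^{\op})$ is closed under colimits in $\Rel_3^{\op}$, so it is computed ambiently; the coreflection of Lemma~\ref{lem-coreflective} affects only limits such as the product $(2S)^3$, and the mono part of the image factorization is still a regular mono carrying the restriction of $R_{S^3}$, so the test relation comes out the same as in $\Rel_3^{\op}$. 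Moreover, for any $S$ in the subcategory the relation $R_S$ is already Mal'tsev-closed, so ``outside the coreflection of $R_S$'' means nothing more than ``outside $R_S$.'' The invariant-for-the-closure-operator argument you plan would therefore be solving a difficulty that does not arise --- and it, like the counterexample, is left unexecuted. In short: the proposal sketches the right skeleton but supplies neither the reduction (the paper's closure proposition) nor the explicit relation that makes the proof work.
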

	\begin{proof}[Sketch]
		Since $\Mal(\Rel_3^{\op})$ is closed under binary products and regular quotients in $\C$, if $\Mal(\Rel_3^{\op})$ were a majority category, then we would have $\Mal(\Rel_3^{\op}) \subseteq \Maj(\Rel_3^{\op})$ by the proposition above. Thus to show that $\Mal(\Rel_3^{\op})$ is not a majority category, it suffices to produce a Mal'tsev object $S$ which is not a majority object. Consider the ternary relation $S$ where $U_S = \{0,1\}$ and 
		\[
		R_S = \{(1,1,0), (0,1,1), (0,0,0)\}.
		\]
		Then it is routine to verify that $S$ satisfies the conditions of Lemma~\ref{lem-maltsev-relations}, and is thus an object of $\Mal(\Rel_3^{\op})$. If the $f$ in the statement of Lemma~\ref{majority-in-rel} above were a morphism in $\Rel_3$, then we would have
		\[
		((1,0,0) , (1,1,0), (0,1,0)) \in R_{S^3} \implies (f(1,0,0), f(1,1,0), f(0,1,0)) = (0,1,0) \in R_S
		\]
		so that $\Mal(\Rel_3^{\op})$ is not a majority category.
	\end{proof}
	\section{Comajority excludes majority} \label{sec-comajority-excludes-majority}
	In Section~\ref{sec-Examples} we saw that many categories of a geometric nature, such as topological spaces, metric spaces, any topos, ect., form comajority categories. This raises the question of whether there are categories which are simultaneously majority and comajority. This section proves that the only finitely complete categories $\C$ with binary coproducts, such that $\C$ and $\C^{\op}$ are majority categories are the preorders having finite meets and joins. This result is similar to the result that if a category $\C$ is such that both $\C$ and $\C^{\op}$ are distributive categories, then $\C$ is a preorder. 
	
	In what follows, by a \textit{majority} algebra we mean a set $X$ equipped with a majority operation $p_X:X^3 \rightarrow X$. A homomorphism of majority algebras $f:(X,p_X) \rightarrow (Y,p_Y)$ is a function $f:X \rightarrow Y$ satisfying $p_Y(f(x),f(y),f(z)) = f(p_X(x,y,z))$. A majority algebra is said to be \textit{commutative} if the majority operation is a homomorphism.
	\begin{lemma}\label{lemA}
		Let $\C$ be a finitely complete majority category and $A$ any object in $\C$, then the morphisms
		\[
		\xymatrix{
			A^3 \ar[rr]^{(\pi_1,\pi_1, \pi_3)} &   & A^3 &  & A^3 \ar[ll]_{(\pi_1,\pi_2, \pi_2)}\\
			& &  A^3 \ar[u]|-{(\pi_3,\pi_2, \pi_3)}
		}
		\]
		are jointly strongly epimorphic.
	\end{lemma}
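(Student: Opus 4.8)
The plan is to use the standard characterization of jointly strongly epimorphic families in a finitely complete category: the three displayed morphisms are jointly strongly epimorphic if and only if every monomorphism $m:M \rightarrowtail A^3$ through which all three factor is an isomorphism. So I would fix such a mono $m$ and aim to show that the identity $1_{A^3}$ itself factors through $m$; since a monomorphism admitting a right inverse is an isomorphism, this finishes the proof.

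The key device is to view $m:M \rightarrowtail A^3 = A \times A \times A$ as an internal ternary relation $R$ on $A$ (its three legs $\pi_i m$ are jointly monic precisely because $m$ is), and to apply condition~(\textbf{M}) with test object $S = A^3$. Saying that $(\pi_1,\pi_1,\pi_3)$, $(\pi_1,\pi_2,\pi_2)$ and $(\pi_3,\pi_2,\pi_3)$ each factor through $m$ is exactly saying that the triples $(\pi_1,\pi_2,\pi_2)$, $(\pi_1,\pi_1,\pi_3)$ and $(\pi_3,\pi_2,\pi_3)$ are $R$-related. I would then set, in the notation of (\textbf{M}),
\[
a_1 = \pi_1,\quad a_2 = \pi_3,\quad b_1 = \pi_2,\quad b_2 = \pi_1,\quad c_1 = \pi_3,\quad c_2 = \pi_2,
\]
so that these three $R$-related triples are precisely $(a_1,b_1,c_2)$, $(a_1,b_2,c_1)$ and $(a_2,b_1,c_1)$.

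Condition~(\textbf{M}) then delivers that $(a_1,b_1,c_1) = (\pi_1,\pi_2,\pi_3) = 1_{A^3}$ is $R$-related, i.e.\ the identity factors through $m$, as desired. There is no real obstacle beyond the combinatorial matching of indices above; the only point requiring care is the initial reduction, namely that in a finitely complete category this extremal factorization condition is equivalent to being jointly strongly epimorphic. Conceptually, the lemma is just the observation that the majority pattern $\{(x,x,y),(x,y,x),(y,x,x)\}$ reappears here as the three projection-triples $(\pi_1,\pi_1,\pi_3)$, $(\pi_1,\pi_2,\pi_2)$, $(\pi_3,\pi_2,\pi_3)$, and the defining closedness property~(\textbf{M}) immediately forces the diagonal $1_{A^3}$ into the relation.
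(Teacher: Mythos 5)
Your proposal is correct and is essentially the paper's own proof: both view the monomorphism as a ternary relation on $A$, apply condition~(\textbf{M}) with test object $S = A^3$ to the three projection-triples, and conclude that $(\pi_1,\pi_2,\pi_3) = 1_{A^3}$ factors through the mono, which is therefore a split epimorphism and hence an isomorphism. Your explicit index assignment $a_1 = \pi_1$, $a_2 = \pi_3$, $b_1 = \pi_2$, $b_2 = \pi_1$, $c_1 = \pi_3$, $c_2 = \pi_2$ is exactly the matching the paper leaves implicit.
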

	\begin{proof}
		Suppose  $r$ is a monomorphism, such that each of the morphisms above factor through $R$:
		\[
		\xymatrix{
			& R \ar[d]^r & & R \ar[d]^r  & & R \ar[d]^r     \\
			A^3 \ar[ru]^{m_1} \ar[r]_-{(\pi_1,  \pi_1, \pi_3)} & A^3 & A^3 \ar[ru]^{m_2} \ar[r]_-{(\pi_1, \pi_2, \pi_2)} & A^3 & A^3 \ar[ru]^{m_3} \ar[r]_-{(\pi_3, \pi_2, \pi_3)} & A^3
		}
		\]
		\noindent
		Then there exists $m: A^3 \to R$ making the diagram
		
		\centerline{
			\xymatrix{
				& R \ar[d] &    \\
				A^3 \ar@{..>}[ru]^{m} \ar[r]_-{(\pi_1, \pi_2, \pi_3)} & A^3
			}
		}
		\noindent
		commute, so that $r$ is a split epimorphism, and hence an isomorphism. 
	\end{proof}
	
	\begin{lemma} \label{lem-commutative-maj-operation}
		Let $\C$ be a finitely complete majority category with binary coproducts. If $\C$ and $\C^{\op}$ are majority categories, then every $\hom$-set can be equipped with a commutative majority operation.
	\end{lemma}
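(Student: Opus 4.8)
The plan is to produce, for each object $Y$, a single internal commutative majority operation $\theta_Y\colon Y^3\to Y$ and then transport it to the hom-sets pointwise. Writing $\sigma_1=(\pi_1,\pi_1,\pi_3)$, $\sigma_2=(\pi_1,\pi_2,\pi_2)$ and $\sigma_3=(\pi_3,\pi_2,\pi_3)\colon Y^3\to Y^3$ for the three morphisms appearing in Lemma~\ref{lemA} (taking $A=Y$), I want $\theta_Y$ to be the unique morphism satisfying $\theta_Y\sigma_1=\pi_1$, $\theta_Y\sigma_2=\pi_2$ and $\theta_Y\sigma_3=\pi_3$; observe that these three equations are exactly the three majority identities for $\theta_Y$. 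Given such a $\theta_Y$, I would define the operation on $\hom(X,Y)$ by $p(f,g,h)=\theta_Y\circ(f,g,h)$, whose majority laws are then immediate from the displayed equations, so the entire content of the lemma is packed into constructing $\theta_Y$ and checking it is commutative.

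Existence of $\theta_Y$ is the crux, and it is where the comajority hypothesis is used. First, by Lemma~\ref{lemA} the family $\{\sigma_1,\sigma_2,\sigma_3\}$ is jointly strongly epimorphic in $\C$, hence jointly monomorphic in $\C^{\op}$; thus $R=(Y^3,\sigma_1,\sigma_2,\sigma_3)$ is a genuine ternary relation in the majority category $\C^{\op}$. Translating $R$-relatedness in $\C^{\op}$ back into $\C$, a triple $(\bar a,\bar b,\bar c)$ of morphisms $Y^3\to Y$ is $R$-related precisely when there is a $w\colon Y^3\to Y$ with $w\sigma_1=\bar a$, $w\sigma_2=\bar b$, $w\sigma_3=\bar c$. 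The key observation is that single projections serve as the required witnesses: $w=\pi_2$ shows $(\pi_1,\pi_2,\pi_2)$ is $R$-related, $w=\pi_1$ shows $(\pi_1,\pi_1,\pi_3)$ is $R$-related, and $w=\pi_3$ shows $(\pi_3,\pi_2,\pi_3)$ is $R$-related, each being a one-line check from the definitions of the $\sigma_i$. These three tuples are exactly the premises of condition \textbf{(M)} for the data $a_1=\pi_1,\ a_2=\pi_3,\ b_1=\pi_2,\ b_2=\pi_1,\ c_1=\pi_3,\ c_2=\pi_2$, so applying \textbf{(M)} in $\C^{\op}$ forces the conclusion tuple $(\pi_1,\pi_2,\pi_3)$ to be $R$-related; this is precisely a morphism $\theta_Y\colon Y^3\to Y$ with $\theta_Y\sigma_i=\pi_i$.

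For uniqueness and naturality I would again invoke Lemma~\ref{lemA}: joint epimorphicity of $\{\sigma_i\}$ in $\C$ gives at most one $\theta_Y$ with $\theta_Y\sigma_i=\pi_i$. Since the $\sigma_i$ are built from projections they are natural in $Y$, satisfying $\phi^3\sigma_i^{Y}=\sigma_i^{Y'}\phi^3$, and this upgrades uniqueness to naturality of $\theta$: for $\phi\colon Y\to Y'$ the morphisms $\phi\theta_Y$ and $\theta_{Y'}\phi^3$ agree after precomposition with each $\sigma_i^{Y}$, hence coincide. Commutativity of the hom-set operation is then essentially free, as it reduces to the medial law $\theta_Y\circ(\theta_Y\times\theta_Y\times\theta_Y)=\theta_Y\circ(\theta_Y\times\theta_Y\times\theta_Y)\circ\tau$ (with $\tau$ the transposition regrouping $(Y^3)^3$): this is exactly the naturality square of $\theta$ evaluated at $\phi=\theta_Y\colon Y^3\to Y$, once one identifies $\theta_{Y^3}$ with the coordinatewise operation $\theta_Y\times\theta_Y\times\theta_Y$ by naturality against the three projections $Y^3\to Y$.

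The main obstacle is the existence step, and more precisely the act of recognizing $R=(Y^3,\sigma_1,\sigma_2,\sigma_3)$ as the correct relation in $\C^{\op}$ — legitimate only because Lemma~\ref{lemA} supplies the joint epimorphicity — together with the bookkeeping that the three majority premises of \textbf{(M)} are witnessed by single projections. I would stress the division of labour that makes the double hypothesis necessary: the comajority assumption (that $\C^{\op}$ is a majority category) is used solely to produce $\theta_Y$, whereas the majority assumption on $\C$, via Lemma~\ref{lemA}, is used for the legitimacy of $R$, for uniqueness, and hence for naturality and commutativity. Neither hypothesis alone suffices, which is consistent with the fact that $\Lat$ is a majority category none of whose objects carries an internal majority morphism.
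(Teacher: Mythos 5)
Your proposal is correct and follows essentially the same route as the paper: Lemma~\ref{lemA} makes $(Y^3,\sigma_1,\sigma_2,\sigma_3)$ a ternary relation in $\C^{\op}$ (the paper packages it as the corelation $M_A\colon 3A^3\to A^3$ using binary coproducts), the projections serve as the three \textbf{(M)}-premises so that comajority yields the internal operation $p_A$ with $p_A\sigma_i=\pi_i$, and joint epimorphicity of the $\sigma_i$ then makes every morphism a homomorphism, giving commutativity of the induced hom-set operation. Your write-up is in fact more explicit than the paper's at the one delicate point (the exact instantiation $a_1=\pi_1$, $a_2=\pi_3$, $b_1=\pi_2$, $b_2=\pi_1$, $c_1=\pi_3$, $c_2=\pi_2$ of condition \textbf{(M)} in $\C^{\op}$, which the paper leaves implicit), and the remaining steps coincide with the paper's argument.
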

	\begin{proof}
		Let $A$ be any object of $\C$, then by Lemma~\ref{lemA} the morphism
		\[
		\xymatrix{
			3A^3  \ar[rrr]^-{M_A = \begin{pmatrix}
				\pi_1 & \pi_1 & \pi_3 \\
				\pi_1 & \pi_2 & \pi_2 \\
				\pi_3 & \pi_2 & \pi_3
				\end{pmatrix}}   &  &  &      A^3
		}
		\]
		is an epimorphism. In particular, $A^3$ together with $M_A$ is a ternary corelation on $A^3$ (a ternary relation in $\C^\mathsf{op}$).  Composing $M_A$ with each of the projections $\pi_i:A^3 \rightarrow A$, we have the following commutative diagrams:
		
		\centerline{
			\xymatrix{ 
				3A^3 \ar[rd]_{\colvec{3}{\pi_1}{\pi_1}{\pi_3}}\ar@{->>}[r]^{M_A} & A^3 \ar[d]^{\pi_1} &  3 A^3 \ar[rd]_{\colvec{3}{\pi_1}{\pi_2}{\pi_2}} \ar@{->>}[r]^{M_A} & A^3 \ar[d]^{\pi_2} \ar[d] & 3 A^3 \ar[rd]_{\colvec{3}{\pi_3}{\pi_2}{\pi_3}} \ar@{->>}[r]^{M_A} & A^3 \ar[d]^{\pi_3} \\
				& A & & A & & A
			}
		}
		\noindent
		Since $\C$ is a comajority category, there exists a morphism $p_A:A^3 \rightarrow A$ making the diagram 
		\centerline{
			\xymatrix{
				3A^3 \ar@{->>}[rr]^{M_A} \ar[rrd]_{\small\begin{pmatrix}
						\pi_1 \\
						\pi_2 \\
						\pi_3
				\end{pmatrix}} & &     A^3 \ar@{..>}[d]^{p_A} \\
				& & A
			}
		}
		\noindent
		commute. Thus we have constructed an internal majority operation $p_A$ on $A$, for each object $A$ in $\C$.
		Next, to see that every morphism in $\C$ is a homomorphism with respect to the internal majority operation constructed above, let $f:A \rightarrow B$ be any morphism in $\C$, then the commutativity of the diagram below follows from the commutativity of the top and outer rectangles, and the fact that $M_A$ is an epimorphism.	
		\[
		\xymatrix{
			3A^3 \ar@{->>}[d]^{M_A}\ar[rr]^{3f^3} & & 3B^3 \ar@{->>}[d]^{M_B}\\
			A^3 \ar[d]^{p_A} \ar[rr]^{f^3} & & B^3 \ar[d]^{p_B} \\
			A \ar[rr]^f & & B \\
		}
		\]
		The commutativity of the bottom square is precisely the statement that $f$ is a homomorphism with respect to the internal majority operations $p_A$ and $p_B$. Therefore, for any objects $S$ and $A$ the composite
		\[
		\hom(S,A)^3 \simeq \hom(S,A^3) \xrightarrow{\hom(S,p_A)} \hom(S,A)
		\]
		is a commutative majority operation.
	\end{proof}
	\begin{lemma} \label{lem-comutative-maj-algebra}
		Let $(X,p_X)$ be a commutative majority algebra, then $X$ has at most one element.
	\end{lemma}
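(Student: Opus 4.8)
The plan is to first unwind the commutativity hypothesis into a concrete equational identity, and then to exhibit a single $3\times 3$ array of elements of $X$ on which that identity collapses $a$ and $b$ into one element.

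First I would spell out what it means for $p_X\colon X^3 \to X$ to be a homomorphism of majority algebras, where $X^3$ carries the product (pointwise) majority operation. Writing the defining condition $p_X\bigl(p_{X^3}(u,v,w)\bigr) = p_X\bigl(p_X(u),p_X(v),p_X(w)\bigr)$ for $u,v,w \in X^3$ and setting $u=(u_{11},u_{12},u_{13})$, $v=(u_{21},u_{22},u_{23})$, $w=(u_{31},u_{32},u_{33})$, this becomes the medial (entropic) identity: for every $3\times 3$ array $(u_{ij})$ of elements of $X$,
\[
p_X\bigl(p_X(u_{11},u_{12},u_{13}),\,p_X(u_{21},u_{22},u_{23}),\,p_X(u_{31},u_{32},u_{33})\bigr)=p_X\bigl(p_X(u_{11},u_{21},u_{31}),\,p_X(u_{12},u_{22},u_{32}),\,p_X(u_{13},u_{23},u_{33})\bigr).
\]
In words: reducing the array first along its rows and then combining the three results gives the same element as reducing first along its columns and then combining.

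The heart of the argument is choosing the array cleverly. Given arbitrary $a,b \in X$, I would apply the medial identity to
\[
\begin{pmatrix} a & b & a \\ a & a & b \\ b & b & b \end{pmatrix}.
\]
Reducing along rows uses the three majority equations: $p_X(a,b,a)=a$, $p_X(a,a,b)=a$, $p_X(b,b,b)=b$, and then $p_X(a,a,b)=a$, so the row-reduction yields $a$. Reducing along columns gives $p_X(a,a,b)=a$, $p_X(b,a,b)=b$, $p_X(a,b,b)=b$, and then $p_X(a,b,b)=b$, so the column-reduction yields $b$. Since the medial identity equates the two reductions, we obtain $a=b$. As $a$ and $b$ were arbitrary, $X$ has at most one element.

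The only real obstacle is locating an array whose two reductions produce genuinely different outputs using only $p_X(x,x,y)=x$, $p_X(x,y,x)=x$, $p_X(y,x,x)=x$; note that for symmetric arrays both reductions trivially coincide, so a deliberately asymmetric choice is forced. Once such an array is in hand, every evaluation above is a single application of one majority equation, and the conclusion is immediate.
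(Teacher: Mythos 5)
Your proof is correct and follows essentially the same route as the paper: the paper's computation is exactly an application of the medial identity $p_X(p_{X^3}(u,v,w)) = p_X(p_X(u),p_X(v),p_X(w))$ to the array with rows $(x,x,y)$, $(x,y,y)$, $(y,x,y)$, which is (up to transposition and a row swap) the same asymmetric array you chose. The only difference is presentational — the paper writes the two reductions as a single chain of equalities starting from $x = p_X(x,x,y)$ rather than isolating the medial law explicitly.
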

	\begin{proof}
		Let $x,y \in X$ be any two elements, then
		\begin{align*}
		x &= p_X(x,x,y) \\
		&= p_X(p_X(x,x,y),p_X(x,y,x), p_X(y,y,y)) \\
		&= p_X(p_{X^3}((x,x,y),(x,y,y), (y,x,y))) \\
		&= p_X(p_X(x,x,y),p_X(x,y,y),p_X(y,x,y)) \\
		&= p_X(x,y,y) = y
		\end{align*}
	\end{proof}
	\noindent
	As an immediate corollary of Lemma~\ref{lem-commutative-maj-operation} and Lemma~\ref{lem-comutative-maj-algebra}, we have:
	\begin{theorem} \label{thm-majority-excludes-comajority}
		If $\C$ has finite limits and binary coproducts, and $\C$ and $\C^{\op}$ are majority categories, then $\C$ is a preorder. 
	\end{theorem}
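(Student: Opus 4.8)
The plan is to combine the two immediately preceding lemmas, since the theorem is essentially their composite. First I would check that the hypotheses line up: the theorem assumes that $\C$ has finite limits and binary coproducts and that both $\C$ and $\C^{\op}$ are majority categories, which are exactly the hypotheses of Lemma~\ref{lem-commutative-maj-operation} (there ``finitely complete majority category with binary coproducts'' together with the assumption on $\C^{\op}$). Invoking that lemma, for every pair of objects $S$ and $A$ the set $\hom(S,A)$ carries a commutative majority operation, namely the one induced by $\hom(S,p_A)$ under the natural isomorphism $\hom(S,A)^3 \simeq \hom(S,A^3)$, where $p_A$ is the internal majority operation constructed in the proof of that lemma.

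Next I would apply Lemma~\ref{lem-comutative-maj-algebra} to each of these algebras. Since every $(\hom(S,A),\hom(S,p_A))$ is a commutative majority algebra, that lemma forces $\hom(S,A)$ to have at most one element. As $S$ and $A$ were arbitrary, this says that between any two objects of $\C$ there is at most one morphism.

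Finally I would record that a category in which every hom-set is empty or a singleton is precisely a (thin category, i.e.\ a) preorder: the objects form the underlying set, and $A \leq B$ is declared to hold exactly when $\hom(A,B) \neq \varnothing$, with reflexivity supplied by identities and transitivity by composition. Thus $\C$ is a preorder, as claimed.

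I do not expect any genuine obstacle here, as the substantive content was already discharged in establishing the two lemmas; the theorem is their formal corollary. The only points worth stating explicitly are that the hypotheses of Lemma~\ref{lem-commutative-maj-operation} are met verbatim, and that ``preorder'' is being used in the sense of a category with at most one arrow between any two objects, which is literally the condition $\lvert\hom(S,A)\rvert \leq 1$ that the two lemmas jointly yield.
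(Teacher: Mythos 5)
Your proposal is correct and is exactly the paper's argument: the paper states this theorem as an immediate corollary of Lemma~\ref{lem-commutative-maj-operation} and Lemma~\ref{lem-comutative-maj-algebra}, which is precisely the composition you carried out. Spelling out that a category whose hom-sets all have at most one element is a preorder is the only detail the paper leaves implicit, and you handled it correctly.
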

	\begin{remark}
		It is possible to prove the theorem above under different limit and colimit assumptions, but it is impossible without at least some limit and colimit assumptions. This is because the the category consisting of just two parallel arrows is both majority and comajority. 
	\end{remark}
	\begin{remark}
		The proof above depends on the fact that the morphisms in the statement of Lemma \ref{lemA} are epimorphic. In a unital category, they are jointly strongly epimorphic. Therefore, by the proof above we may also conclude that a unital category $\C$ with binary coproducts such that $\C$ is comajority is equivalent to the terminal category $\mathbf{1}$.
	\end{remark}
	
	\noindent 
	In \cite{closed2}, the author asks: for a general term matrix $M$, how the following conditions on a category $\C$ are related to each other:
	\begin{itemize}
		\item[(a)] $\C$ is enriched in the variety of commutative $M$-algebras.
		\item[(b)] $\C$ and $\C^{\op}$ are $M$-closed. 
	\end{itemize}
	\noindent
	For the matrices corresponding to unital, subtractive and Mal'tsev categories
	\[\left(\!\!\begin{array}{cc|c}
	x &  0 & x\\
	0 &  x & x
	\end{array}\!\!\right), \qquad
	\left(\!\!\begin{array}{cc|c}
	x &  x & 0\\
	x &  0 & x
	\end{array}\!\!\right), \qquad
	\left(\!\!\begin{array}{ccc|c}
	x &  x & y & y\\
	u &  v & v & u
	\end{array}\!\!\right),
	\]
	(a) and (b) are equivalent under suitable conditions on the base category $\C$ \cite{closed4, closed1}. For instance, if $\C$ is a pointed category with binary products and coproducts, epi-mono factorizations of its morphisms, such that (b) holds for 
	\[
	M = \left(\!\!\begin{array}{cc|c}
	x &  x & 0\\
	x &  0 & x
	\end{array}\!\!\right),
	\]
	then $\C$ is enriched in the category of abelian groups (which are the same as commutative subtraction algebras). For the matrix corresponding to majority categories, we also have (a) equivalent to (b), and the results of this section show this equivalence.
	\subsection*{Acknowledgements}
	I would like to thank my supervisor Professor Z.~Janelidze for encouraging me to introduce and study majority categories. I would also like to thank Professor M.~Gran for suggesting to me to consider internal structures (in particular, internal groupoids) in majority categories, which lead me to Theorem~\ref{thm-internal-groupoid}. Finally, I would like to thank an anonymous referee for many helpful comments, and in particular the suggested improvement of the original proof of Theorem~\ref{thm-internal-groupoid}.
	
\end{document}